\newtheorem{thm}{Theorem}[section]
\newtheorem{lem}[thm]{Lemma}
\newtheorem{prop}[thm]{Proposition}
\theoremstyle{definition}
\newtheorem{defn}{Definition}[section]
\theoremstyle{remark}
\newtheorem{rem}[thm]{Remark}
\numberwithin{equation}{section}
\begin{document}

\title[On mixed Quermassintegrals]{On Mixed Quermassintegrals for log-concave Functions}

\author[F. Chen, J. Fang, M. Luo, C. Yang]{Fangwei Chen$^1$, Jianbo Fang$^1$, Miao Luo$^2$, Congli Yang $^2$}

\address{1. School of Mathematics and Statistics, Guizhou University of Finance and Economics,
Guiyang, Guizhou 550025, People's Republic of China.}
\email{cfw-yy@126.com}
\email{16995239@qq.com}

\address{2. School of Mathematical Sciences, Guizhou Normal
University, Guiyang, Guizhou 550025, People's Republic of China}
\email{lm975318@126.com}
\email{yangcongli@gznu.edu.cn}

\thanks{The  work is supported in part by CNSF (Grant No. 11561012, 11861004, 11861024), Guizhou  Foundation for Science and Technology (Grant
No. [2019] 1055, [2019]1228), Science and technology top talent support program of Guizhou Eduction Department (Grant No. [2017]069).}


\subjclass[2010]{52A20, 52A40, 52A38.}

\keywords{Log-concave functions; Quermassintegral; Mixed Quermassintegral, Minkowski inequality}

\dedicatory{}



\begin{abstract}
In this paper, the functional Quermassintegrals of log-concave functions in $\mathbb R^n$ are discussed, we obtain the integral expression of the $i$-th functional mixed Quermassintegrals, which are similar to the integral expression of the $i$-th Quermassintegrals of convex bodies.

\end{abstract} \maketitle

\section{introduction}

Let $\mathcal K^n$ be the set of convex bodies (compact convex subsets with nonempty interiors) in $\mathbb R^n$, the fundamental Brunn-Minkowski inequality for convex bodies states that for $K,\,L \in\mathcal K^n$,  the volume of the bodies and of their Minkowski sum $K+L=\{x+y:x\in K, \,\, and\,\,y\in L\}$ are given by
\begin{align}\label{bur-min-ineq}
  V\big(K+L)^{\frac{1}{n}}\geq V(K)^{\frac{1}{n}}+V(L)^{\frac{1}{n}},
\end{align}
 with equality if and only if $K$ and $L$ are homothetic, namely they agree up to a translation and a dilation. Another important geometric inequality related to the convex bodies $K$ and $L$ is the mixed volume,  the important result concern the mixed volume is the Minkwoski's first inequality
\begin{align}\label{mix-vol-ineq}
V_1(K, L):=\frac{1}{n}\lim_{t\rightarrow 0^+}\frac{V(K+tL)-V(K)}{t}\geq V(K)^{\frac{n-1}{n}}V(L)^{\frac{1}{n}},
\end{align}
for $K, L\in \mathcal K^n$. Specially, when choose $L$ to be a unit ball, up to a factor, $V_1(K, L)$ is exactly the perimeter of $K$, and inequality (\ref{mix-vol-ineq}) turns out to be the isoperimetric inequality in the class of convex bodies.  The mixed volume $V_1(K, L)$ admits a simple integral representation (see\cite{lut-the1993,lut-the1996})
 \begin{align}
   V_1(K,L)=\frac{1}{n}\int_{ S^{n-1}}h_LdS_K,
 \end{align}
where $h_L$ is the support function of $L$ and $S_K$ is the area measure of $K$.

The Quermassintegrals $W_i(K)$ $(i=0,\,\,1,\,\,\cdots n)$ of $K$, which are defined by letting $W_0(K)=V_n(K)$, the volume of $K$; $W_n(K)=\omega_n$, the volume of the unit ball $B^n_2$ in $\mathbb R^n$ and for general $i=1,\,2,\cdots, n-1$,
\begin{align}
W_{n-i}(K)=\frac{\omega_n}{\omega_i}\int_{\mathcal G_{i,n}}vol_i(K
|_{\xi_i})d\mu(\xi_i),
\end{align}
where the $\mathcal G_{i,n}$ is the Grassmannian manifold of $i$-dimensional linear subspaces of $\mathbb R^n$, $d\mu(\xi_i)$ is the normalized Haar measure on $\mathcal G_{i,n}$, $K|_{\xi_i}$ denotes the orthogonal projection of $K$ onto the $i$-dimensional subspaces $\xi_i$, and $vol_i$ is the $i$-dimensional volume on space $\xi_i$.

In the 1930s, Aleksandrov, Fenchel and  Jessen (see \cite{ale-on1937,fen-jen-men1938}) proved that for a convex body $K$ in $\mathbb R^n$, there exists a regular Borel measure $S_{n-1-i}(K)$ ($i=0,\,1,\cdots, n-1$) on $S^{n-1}$, the unit sphere in $\mathbb R^n$, such that for any convex bodies $K$ and $L$, the following representations hold
\begin{align}\label{quer}
\nonumber W_i(K,L)&=\frac{1}{n-i} \lim_{t \rightarrow 0^+}\frac{W_i(K+t L)-W_i(K)}{\epsilon}\\
&=\frac{1}{n}\int_{S^{n-1}}h_L(u)dS_{n-1-i}(K,u).
\end{align}
The quantity $W_i(K,L)$ is called the $i$-th mixed Quermassintegral of $K$ and $L$.

In the 1960s, the Minkowski addition was extended to the $L^p$ $(p\geq 1)$ Minkowski sum $h^p_{K+_pt \cdot L}=h^p_K+t h^p_L.$
The extension of the mixed Quermassintegrals to the $L^p$ mixed Quermassintegrals due to Lutwak \cite{lut-the1993}, and the $L^p$ mixed Quermassintegral inequalities, the $L^p$ Minkowski problem are established. See\cite{lut-yan-zha-opt2006,wer-on2007,wer-ye-ine2010,lut-yan-zha-vol2004,
wer-ye-new2008,lut-yan-zha-on2004,lut-yan-zha-sha2002,lut-yan-zha-lp2000,lut-the1996,Hab-sch-asy2009} for more about the $L^p$ Minkowski theory and $L^p$ Minkowski inequalities. The first variation of the $L^p$ mixed Quermassintegrals are defined by
\begin{align}\label{lp-que-def}
  W_{p,i}(K, L):=\frac{p}{n-i}\lim_{t \rightarrow 0^+}\frac{W_i(K+_pt \cdot L)-W_i(L)}{t},
\end{align}
for $i=0,\,1,\cdots, n-1$. In particular, for $p=1$ in (\ref{lp-que-def}), it is $W_i(K,L)$, and  $W_{p,0}(K,L)$ is denoted by $V_p(K,L)$, which is called the $L_p$ mixed volume of $K$ and $L$. Similarly, the $L^p$ mixed Quermassintegral has the following integral representation (see \cite{lut-the1993}):
\begin{align}\label{p-que-exp}
  W_{p,i}(K,L)=\frac{1}{n}\int_{S^{n-1}}h_L^p(u)dS_{p,i}(K,u).
\end{align}
The measure $S_{p,i}(K,\cdot)$ is absolutely continuous with respect to $S_i(K,\cdot)$, and has Radon-Nikodym derivative
$\frac{dS_{p,i}(K,\cdot)}{dS_{i}(K,\cdot)}=h_K(\cdot)^{1-p}.$
 Specially, when $p=1$ in (\ref{p-que-exp}) yields the representation (\ref{quer}).

In most recently, the interest in the log-concave functions has been considerably increasing, motivated by the analogy properties between the log-concave functions and the volume convex bodies in $\mathcal K^n$. The classical Pr\'{e}kopa-Leindler inequality (see \cite{lei-on1972,bra-lei-on1976,pre-log1971,pre-on1973,pre-new1975}) firstly shows the connections of the volume of convex bodies and  log-concave functions. The Blaschke-Santal\'{o} inequality for even log-concave functions is  established by Ball in  \cite{bal-iso1986,bal-log1988}, the general case is proved by Artstein-Avidan, Klartag and Milman \cite{art-kla-mil-the2004}, other proofs are given by Fradelizi, Meyer \cite{fra-mey-som2007} and Lehec \cite{leh-a2009,leh-par2009}. See \cite{fra-gor-mey-rei-the2010,bar-bor-fra-sta2014,rot-a2014,had-jim-mon-asy2019} for more about the functional Blaschke-Santal\'o inequalities. The mean width for log-concave function is introduced by Klartag, Milman and Rotem \cite{kla-mil-geo2005,rot-on2012,rot-sup2013}. The affine isoperimetric inequality for log-concave functions are proved by Artstein-Avidan, Klartag, Sch\"{u}tt and Werner \cite{avi-kla-sch-wer-fun2012}. The John ellipsoid for log-concave functions have been establish by Guti\'{e}rrez, Merino Jim\'{e}nez and Villa \cite{gut-mer-jim-vil-joh2018}, the LYZ ellipsoid for log-concave functions are established by Fang and Zhou \cite {fan-zho-lyz2018}. See \cite{gut-mer-jim-vil-rog2016,avi-slo-a2015,Bar-cap-fus-pis-sta2014,cag-wer-div2014,cag-wer-mix2015,cag-ye-aff2016,lin-aff2017} for more about the pertinent results. To establish the functional versions of inequalities and problems from the points of convex geometric analysis is a new research fields.


Let $f=e^{-u}$, $g=e^{-v}$ be log-concave functions, $\alpha,\,\,\beta>0$, the ``sum"  and ``scalar multiplication" of log-concave functions are defined as,
\begin{align*}
  \alpha \cdot f \oplus \beta\cdot g:=e^{-w}, \,\,\,\,\,\,\,\,\mbox {where}\,\,\, w^*=\alpha u^*+\beta v^*,
\end{align*}
here $w^*$ denotes as usual the Fenchel conjugate of the convex function $\omega$.
The total mass integral $J(f)$ of $f$ is defined by,
$J(f)=\int_{\mathbb R^n}f(x)dx.$
 In paper of Colesanti and Fragal\`{a} \cite{col-fra-the2013}, the quantity $\delta J (f,g)$, which is called as the first variation of $J$ at $f$ along $g$,
$\delta J(f,g)=\lim\limits_{t\rightarrow 0^+}\frac{J(f\oplus t\cdot g)-J(f)}{t},$
is discussed. It has been shown that $ \delta J(f,g)$ is finite and is given by
$$\delta J(f,g)=\int_{\mathbb R^n}v^*d\mu(f),$$
where $\mu(f)$ is the measure of $f$ on $\mathbb R^n$.

Inspired by the paper of Colesanti and Fragal\`{a} \cite{col-fra-the2013}, in this paper, we define the $i$-th functional Quermassintegrals $W_i(f)$ as the  $i$-dimensional average total mass of $f$,
$$W_{i}(f):=\frac{\omega_{n}}{\omega_{n-i}}\int_{\mathcal G_{n-i,n}}J_{n-i}(f)d\mu(\xi_{n-i}).$$
Where $J_i(f)$ denotes the $i$-dimensional total mass of $f$ defined in section four, $\mathcal G_{i,n}$ is the Grassmannian manifold of $\mathbb R^n$ and $d\mu(\xi_{n-i})$ is the normalized measure on $\mathcal G_{i,n}$.
Moreover, we define the first variation of $W_i$ at $f$ along $g$, which is
\begin{align*}
   W_i(f,g)=\lim_{t\rightarrow 0^+}\frac{W_i(f\oplus t\cdot g)-W_i(f)}{t}.
\end{align*}
It is natural extension of the Quermassintegrals of convex bodies in $\mathbb R^n$, we call it the $i$-th functional mixed Quermassintegral. In fact, if one takes $f=\chi_K$, and $dom(f)=K\in \mathbb R^n$, then $W_i(f)$ turn out to be $W_i(K)$, and $W_i(\chi_K, \chi_L)$ equals to the $W_i(K,L)$.  The main result in this paper is to show that the $i$-th functional mixed Quermassintegrals have the following integral expressions.
\begin{thm}\label{thm-mix-quer}
Let $f$, $g$ are integrable functions on $\mathcal A'$, $\mu_i(f)$ be the $i$-dimensional measure of $f$, and $W_i(f,g)$ $(i=0, 1, \cdots, n-1)$ are the $i$-th functional mixed Quermassintegrals of $f$ and $g$. Then
\begin{align}\label{mix-que-exp}
W_i(f,g)=\frac{1}{n-i}\int_{\mathbb R^n}h_{g|_{\xi_{n-i}}} d\mu_{n-i}(f),
\end{align}
where $h_g$ is the support function of $g$.
\end{thm}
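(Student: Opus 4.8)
The plan is to reduce \eqref{mix-que-exp} to the first variation formula of Colesanti and Fragal\`{a} \cite{col-fra-the2013}, applied in dimension $n-i$, using that $W_{i}(f)$ is by definition a Grassmannian average of $(n-i)$-dimensional total masses and that the projection of a log-concave function onto a subspace is compatible with $\oplus$ and $t\cdot$. Write $f=e^{-u}$, $g=e^{-v}$, fix $\xi\in\mathcal G_{n-i,n}$, and let $P_{\xi}g=g|_{\xi}$ denote the projection, $P_{\xi}g(x)=\sup_{z\in\xi^{\perp}}g(x+z)=e^{-\widetilde v(x)}$ for $x\in\xi$, where $\widetilde v(x)=\inf_{z\in\xi^{\perp}}v(x+z)$ is the marginal of $v$ over $\xi^{\perp}$. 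Since $\widetilde v$ is convex, $P_{\xi}g$ is log-concave, its support function is $h_{g|_{\xi}}=(\widetilde v)^{*}=v^{*}|_{\xi}$, and $J_{n-i}(g)=J(P_{\xi}g)$ with the total mass computed in $\xi\cong\mathbb R^{n-i}$.

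The first step is a compatibility lemma: for every $\xi\in\mathcal G_{n-i,n}$ and $t>0$,
\begin{align*}
P_{\xi}\big(f\oplus t\cdot g\big)=\big(P_{\xi}f\big)\oplus t\cdot\big(P_{\xi}g\big)\qquad\text{on }\xi .
\end{align*}
At the level of convex functions this is the classical identity that the Fenchel conjugate of a marginal equals the restriction of the conjugate: if $f\oplus t\cdot g=e^{-w}$ with $w^{*}=u^{*}+tv^{*}$, then the marginal $\widetilde w$ of $w$ over $\xi^{\perp}$ satisfies $(\widetilde w)^{*}=w^{*}|_{\xi}=u^{*}|_{\xi}+t\,v^{*}|_{\xi}=(\widetilde u)^{*}+t(\widetilde v)^{*}$, and the right-hand side is exactly the conjugate of $-\log\big(P_{\xi}f\oplus t\cdot P_{\xi}g\big)$; being convex and lower semicontinuous, the two functions coincide. (Equivalently, $\oplus$ is the Asplund sum, which commutes in an obvious way with the fibrewise supremum defining $P_{\xi}$.) I would also check here that $P_{\xi}f,P_{\xi}g$ remain in the class $\mathcal A'$ attached to $\xi$: the exponential tail bound inherent to integrable log-concave functions passes to marginals, so $P_{\xi}f$ and $P_{\xi}g$ are again integrable and log-concave, with the resulting bound on $J(P_{\xi}f)$ uniform over $\mathcal G_{n-i,n}$.

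Granting the lemma, for each fixed $\xi$ the pair $P_{\xi}f,P_{\xi}g$ satisfies the hypotheses of the Colesanti--Fragal\`{a} first variation theorem in dimension $n-i$, so
\begin{align*}
\lim_{t\to0^{+}}\frac{J_{n-i}(f\oplus t\cdot g)-J_{n-i}(f)}{t}
&=\lim_{t\to0^{+}}\frac{J\big(P_{\xi}f\oplus t\cdot P_{\xi}g\big)-J\big(P_{\xi}f\big)}{t}\\
&=\int_{\xi}h_{g|_{\xi}}\,d\mu\big(P_{\xi}f\big),
\end{align*}
$\mu(P_{\xi}f)$ being the measure of $P_{\xi}f$ on $\xi$. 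As in \cite{col-fra-the2013}, the difference quotient $t\mapsto t^{-1}\big(J(P_{\xi}f\oplus t\cdot P_{\xi}g)-J(P_{\xi}f)\big)$ is monotone in $t$, which gives both the existence of the limit and, after fixing a small $t_{0}$, a bound on this difference quotient that is uniform in $\xi$.

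It then remains to integrate over the Grassmannian. By the definitions of $W_{i}(f,g)$ and of $W_{i}$,
\begin{align*}
W_{i}(f,g)=\frac{\omega_{n}}{\omega_{n-i}}\,\lim_{t\to0^{+}}\int_{\mathcal G_{n-i,n}}\frac{J_{n-i}(f\oplus t\cdot g)-J_{n-i}(f)}{t}\,d\mu(\xi_{n-i}),
\end{align*}
and the monotonicity in $t$ together with the uniform $t_{0}$-bound lets one exchange limit and integral by monotone (or dominated) convergence. Substituting the previous step and recalling the definition from Section~4 of the $(n-i)$-dimensional measure $\mu_{n-i}(f)$ as the appropriately normalized Grassmannian average of the measures $\mu(P_{\xi_{n-i}}f)$, one obtains
\begin{align*}
W_{i}(f,g)&=\frac{\omega_{n}}{\omega_{n-i}}\int_{\mathcal G_{n-i,n}}\Big(\int_{\xi_{n-i}}h_{g|_{\xi_{n-i}}}\,d\mu\big(P_{\xi_{n-i}}f\big)\Big)\,d\mu(\xi_{n-i})\\
&=\frac{1}{n-i}\int_{\mathbb R^{n}}h_{g|_{\xi_{n-i}}}\,d\mu_{n-i}(f),
\end{align*}
which is \eqref{mix-que-exp}. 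The main obstacle is the compatibility lemma together with checking that projections keep $f,g$ in the class where the Colesanti--Fragal\`{a} formula holds; once these are in place, the interchange of limit and Grassmannian integral follows from the monotonicity of the difference quotients, and matching the constant $\tfrac{1}{n-i}$ is just an unwinding of the Section~4 definitions.
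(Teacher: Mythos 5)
Your reduction to a fixed subspace is essentially the paper's own strategy: the compatibility identity $P_{\xi}(f\oplus t\cdot g)=P_{\xi}f\oplus t\cdot P_{\xi}g$ is Proposition \ref{f-add-mul}, the identity $(\widetilde v)^{*}=v^{*}|_{\xi}$ is exactly what is used in Lemma \ref{lem-duvp}, and computing the first variation of $J_{n-i}$ along each $\xi$ by the Colesanti--Fragal\`a argument in dimension $n-i$ is what the paper does (it differentiates $J_{n-i}(f_t)$ at $t>0$ via Lemma \ref{lem-duvp} and then integrates back in $t$ before letting $t\to 0^{+}$, whereas you invoke the $t=0^{+}$ statement directly; that difference is immaterial). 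Up to that point your outline matches the paper.

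The genuine gap is in your final step. You pass from the Grassmannian average $\frac{\omega_{n}}{\omega_{n-i}}\int_{\mathcal G_{n-i,n}}\bigl(\int_{\xi}h_{g|_{\xi}}\,d\mu(P_{\xi}f)\bigr)\,d\mu(\xi_{n-i})$ to $\frac{1}{n-i}\int_{\mathbb R^{n}}h_{g|_{\xi_{n-i}}}\,d\mu_{n-i}(f)$ by asserting that $\mu_{n-i}(f)$ \emph{is defined as} a normalized Grassmannian average of the fibre measures $\mu(P_{\xi}f)$. It is not: the paper defines $\mu_{n-i}(f)$, for a fixed subspace, as the pushforward under $\nabla(u|_{\xi_{n-i}})$ of $f|_{\xi_{n-i}}$ divided by a power of $\|x\|$, and that weight is there precisely so that the Blaschke--Petkantschin formula quoted in Section 4 (which expresses $\int_{\mathbb R^{n}}F\,dx$ as $\frac{\omega_{n}}{\omega_{n-i}}$ times the Grassmannian average of the $\|x\|$-weighted integrals of $F$ over the subspaces) converts the average of subspace integrals into a single integral over $\mathbb R^{n}$. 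That conversion is the one non-routine ingredient of the theorem --- it is what turns a family of integrals over the $\xi_{n-i}$ into the integral over $\mathbb R^{n}$ appearing in \eqref{mix-que-exp} --- and it is absent from your argument; without it your last displayed equality is unjustified. A smaller related issue: your displayed expression for $W_{i}(f,g)$ drops the prefactor $\frac{1}{n-i}$ from its definition, so the constant cannot ``unwind from the definitions'' as claimed; it has to be tracked together with the normalization $\frac{\omega_{n}}{\omega_{n-i}}$ that the Blaschke--Petkantschin formula absorbs.
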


The paper is organized as follows, in  section 2, we introduce some notations about the log-concave functions. In section 3, the projection of log-concave function are discussed. In section 4, we focus on how can we represent the $i$-th functional mixed Quermassintegrals $W_i(f,g)$ similar as $W_i(K,L)$. Owing to the Blaschke-Petkantschin formula and the similar definition of the support function of $f$, we obtain the integral represent of the $i$-th functional mixed Quermassintegrals $W_i(f,g)$.


\section{preliminaries}

 Let $u:\Omega\rightarrow (-\infty,+\infty]$ be a convex function, that is $u\big((1-t)x+ty\big)\leq (1-t)u(x)+tu(y)$ for $t\in [0,1]$, here $\Omega=\{x\in \mathbb R^n:u(x)\in \mathbb R\}$ is the domain of $u$.
By the convexity of $u$, $\Omega$ is a convex set in $\mathbb R^n$. We say that $u$ is proper if $\Omega\neq \emptyset$, and $u$ is of class $\mathcal C^2_+$ if it is twice differentiable on $int (\Omega) $, with a positive definite Hessian matrix. In the following we define the subclass of $u$,
\begin{align*}
\mathcal L=\big\{u:\Omega\rightarrow &(-\infty,+\infty]: \mbox{u is convex, low semicontinuous}\\
&\mbox{ and}\,\,\lim_{\|x\|\rightarrow +\infty}u(x)=+\infty\big\}.
\end{align*}
Recall that the Fenchel conjugate of $u$ is the convex function defined by
\begin{align}\label{leg-con}
  u^*(y)=\sup_{x\in\mathbb R^n}\big\{\langle x,y\rangle -u(x)\big\}.
\end{align}
It is obvious that $u(x)+u^*(y)\geq \langle x,y\rangle$ for all $x, y\in \mathbb R^n$, and there is an equality if and only if $x\in \Omega$ and $y$ is in the subdifferential of $u$ at $x$, that means
\begin{align}\label{leg-equ}
  u^*(\nabla u(x))+u(x)=\langle x,\nabla u(x)\rangle.
\end{align}
Moreover, if $u$ is a lower semi-continuous convex function, then also $u^*$ is a lower semi-continuous convex function, and $u^{**}=u$.

The infimal convolution of functions $u$ and $v$ from $\Omega$ to $(-\infty, +\infty]$ defined by
\begin{align}\label{inf-con}
  u\Box v(x)=\inf_{y\in \Omega}\big\{u(x-y)+v(y)\big\}.
\end{align}
The right scalar multiplication by a nonnegative real number $\alpha$:
\begin{align}\label{rig-mul}
\big(u\alpha\big)(x):=\left\{
           \begin{array}{ll}
           \alpha u\big(\frac{x}{\alpha}\big), & \hbox{$if\,\, \alpha >0$;} \\
           I_{\{0\}}, & \hbox{$if\,\,\alpha=0$.}
           \end{array}
         \right.
\end{align}

The following proposition below gathers some elementary properties of the Fenchel conjugate and the infimal convolution of $u$ and $v$,  which can be found in \cite{col-fra-the2013,roc-con1970}.
\begin{prop}
 Let $u,\,\,v:\Omega\rightarrow (-\infty,+\infty]$ be convex functions. Then:
\begin{enumerate}
               \item $\big(u\Box v\big)^*=u^*+v^*$;
               \item $(u\alpha)^*=\alpha u^*, \,\,\,\,\,\alpha>0$;
               \item $dom(u\Box v)=dom(u)+dom(v)$;
               \item it holds $u^*(0)=-\inf(u)$, in particular if $u$ is proper, then $u^*(y)>-\infty$; $\inf(u)>-\infty$ implies $u^*$ is proper.
             \end{enumerate}
\end{prop}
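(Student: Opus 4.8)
The plan is to derive all four statements directly from the definitions: (\ref{leg-con}) of the Fenchel conjugate, (\ref{inf-con}) of the infimal convolution, and (\ref{rig-mul}) of the right scalar multiplication. Each assertion reduces to a manipulation of suprema and infima together with an elementary change of variables, so no auxiliary machinery is needed. Throughout I treat the functions as valued in $(-\infty,+\infty]$ and use the standard conventions for arithmetic with $\pm\infty$.

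For part (1), I would unfold the conjugate of the infimal convolution and rewrite $-\inf_{y}$ as $\sup_{y}(-\,\cdot\,)$:
\begin{align*}
(u\Box v)^*(z)=\sup_{x}\Big\{\langle x,z\rangle-\inf_{y}\big\{u(x-y)+v(y)\big\}\Big\}=\sup_{x}\sup_{y}\big\{\langle x,z\rangle-u(x-y)-v(y)\big\}.
\end{align*}
Substituting $w=x-y$, which is a bijection in $x$ for each fixed $y$, decouples the two variables, so the double supremum factors as $\sup_{w}\{\langle w,z\rangle-u(w)\}+\sup_{y}\{\langle y,z\rangle-v(y)\}=u^*(z)+v^*(z)$, which is the claim.

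For part (2), using (\ref{rig-mul}) with $\alpha>0$ and the substitution $w=x/\alpha$ gives
\begin{align*}
(u\alpha)^*(y)=\sup_{x}\big\{\langle x,y\rangle-\alpha u(x/\alpha)\big\}=\sup_{w}\big\{\alpha\langle w,y\rangle-\alpha u(w)\big\}=\alpha\,u^*(y),
\end{align*}
where the positivity of $\alpha$ is exactly what lets me factor $\alpha$ out of the supremum. For part (3), I would observe that $(u\Box v)(x)<+\infty$ holds if and only if there is some $y$ with both $u(x-y)<+\infty$ and $v(y)<+\infty$, that is $x-y\in dom(u)$ and $y\in dom(v)$; writing $x=(x-y)+y$ then identifies $dom(u\Box v)$ with $dom(u)+dom(v)$, the two inclusions being immediate. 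Finally, for part (4), evaluating (\ref{leg-con}) at $z=0$ yields $u^*(0)=\sup_{x}\{-u(x)\}=-\inf(u)$; if $u$ is proper, picking any $x_0\in\Omega$ gives $u^*(y)\geq\langle x_0,y\rangle-u(x_0)>-\infty$ for every $y$, while $\inf(u)>-\infty$ forces $u^*(0)<+\infty$, so $0\in dom(u^*)$ and $u^*$ is proper.

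None of the four steps presents a genuine obstacle, and the only points requiring care are purely formal. The main one is the interchange and factorization of the nested suprema in (1), which I must justify by the sign rule $-\inf=\sup(-\,\cdot\,)$ and by checking that $w=x-y$ is a bijection, so that no extremal value is gained or lost in passing to the product form. The secondary subtlety is the bookkeeping with $\pm\infty$ in (4), where I must ensure that the identity $u^*(0)=-\inf(u)$ and the ensuing properness implications hold unambiguously in the extended reals.
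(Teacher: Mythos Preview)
Your argument is correct. Each of the four items is handled by the standard computation: the double-supremum manipulation and change of variables in (1), the homogeneous rescaling in (2), the set-theoretic description of finiteness in (3), and the direct evaluation and one-point lower bound in (4) are all valid as written, with the caveats you flag (the $-\inf=\sup(-\cdot)$ identity, bijectivity of $x\mapsto x-y$, and extended-real bookkeeping) being routine.

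As for comparison with the paper: the paper does not give a proof at all. It states the proposition as a collection of elementary facts and simply refers the reader to \cite{col-fra-the2013,roc-con1970}. Your direct verification from the definitions is exactly the kind of argument one finds in Rockafellar, so you have supplied what the paper chose to omit; there is no alternative approach to compare against.
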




The following Proposition about the Fenchel and Legendre conjugates are obtained in \cite{roc-con1970}.
\begin{prop}[\cite{roc-con1970}]\label{leg-con-par}
  Let $u: \Omega\rightarrow (-\infty,+\infty]$ be a closed convex function, and set $\mathcal C:=int (\Omega)$, $\mathcal C^*:=int (dom(u^*))$. Then $(\mathcal C,u)$ is a convex function of Legendre type if and only if $\mathcal C^*, u^*$ is. In this case $(\mathcal C^*,u^*)$ is the Legendre conjugate of $(\mathcal C,u)$ (and conversely). Moreover, $\nabla u:=\mathcal C\rightarrow \mathcal C^*$ is a continuous bijection, and the inverse map of $\nabla u$ is precisely $\nabla u^*$.
\end{prop}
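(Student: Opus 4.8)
The plan is to reduce the statement to the dual characterizations that underlie Rockafellar's theory of conjugacy \cite{roc-con1970}, built on the subdifferential correspondence already recalled in the excerpt. Being of Legendre type means that $u$ is \emph{essentially smooth} (differentiable on the open set $\mathcal{C}=\mathrm{int}(\Omega)$ with $\|\nabla u(x_k)\|\to+\infty$ whenever $x_k\in\mathcal{C}$ tends to $\partial\mathcal{C}$) and \emph{essentially strictly convex} (strictly convex on $\mathcal{C}$). The central tool is the conjugacy relation: for closed proper convex $u$,
\[
y\in\partial u(x)\iff u(x)+u^*(y)=\langle x,y\rangle\iff x\in\partial u^*(y),
\]
together with $u^{**}=u$. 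First I would record the single-valuedness step: $u$ is essentially smooth exactly when $\partial u$ is single-valued on its domain, in which case $\mathrm{int}(\mathrm{dom}(\partial u))=\mathcal{C}$ and $\partial u(x)=\{\nabla u(x)\}$ on $\mathcal{C}$; the blow-up condition is precisely what guarantees that $\nabla u$ is defined on all of $\mathcal{C}$ and nowhere outside it.

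Next I would prove the key duality: $u$ is essentially smooth if and only if $u^*$ is essentially strictly convex. Applying this to the pair $(u,u^*)$ and, via $u^{**}=u$, to $(u^*,u)$ yields
\[
u\ \text{ess.\ smooth}\iff u^*\ \text{ess.\ str.\ convex},\qquad u\ \text{ess.\ str.\ convex}\iff u^*\ \text{ess.\ smooth}.
\]
Hence $u$ is simultaneously essentially smooth and essentially strictly convex precisely when $u^*$ is, i.e.\ $(\mathcal{C},u)$ is of Legendre type iff $(\mathcal{C}^*,u^*)$ is.

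For the bijection I would argue directly from conjugacy. Since both $u$ and $u^*$ are now essentially smooth, $\partial u$ and $\partial u^*$ are single-valued, so $\nabla u$ is defined on $\mathcal{C}$ and $\nabla u^*$ on $\mathcal{C}^*$. For $x\in\mathcal{C}$ set $y=\nabla u(x)$; then $y\in\partial u(x)$, so $x\in\partial u^*(y)$, forcing $y\in\mathrm{dom}(\partial u^*)$ and, by single-valuedness, $x=\nabla u^*(y)$. The blow-up on the $u^*$ side confines $y$ to the open set $\mathcal{C}^*$, so $\nabla u\colon\mathcal{C}\to\mathcal{C}^*$; the symmetric argument shows $\nabla u^*\colon\mathcal{C}^*\to\mathcal{C}$ is a two-sided inverse. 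Continuity of both maps is the standard fact that the gradient of a finite convex function is continuous on the interior of its domain, so $\nabla u$ is a homeomorphism and $(\mathcal{C}^*,u^*)$ is the Legendre conjugate of $(\mathcal{C},u)$.

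The hard part will be the duality lemma that $u$ is essentially smooth iff $u^*$ is essentially strictly convex. The delicate direction converts the quantitative blow-up $\|\nabla u(x_k)\|\to+\infty$ near $\partial\mathcal{C}$ into strict convexity of $u^*$: one must rule out affine pieces in the graph of $u^*$, equivalently rule out that $\partial u^*$ is constant on a nondegenerate segment, and this requires analysing the faces of $\mathrm{dom}(u^*)$ and the normal behaviour of $u$ at $\partial\mathcal{C}$. Once this correspondence between unbounded gradients and the absence of linear segments in the conjugate is in place, the remaining assembly is purely formal.
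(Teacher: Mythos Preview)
The paper does not supply its own proof of this proposition: it is quoted verbatim as a result from Rockafellar \cite{roc-con1970} (this is Theorem~26.5 there) and used as a black box later in Lemma~\ref{lem-duvp}. Your outline is the standard argument behind that theorem, essentially reproducing Rockafellar's own route via the duality ``essentially smooth $\Longleftrightarrow$ conjugate essentially strictly convex'' (Theorem~26.3) together with the subdifferential reciprocity $y\in\partial u(x)\iff x\in\partial u^*(y)$; so there is nothing to compare on the paper's side, and your sketch is correct and aligned with the cited source.
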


A function $f: \mathbb R^n\rightarrow (-\infty, +\infty]$ is called log-concave if for all $x, y\in \mathbb R^n$ and $0<t<1$, we have $f\big((1-t)x+ty \big)\geq f^{1-t}(x)f^t(y).$
If $f$ is a strictly positive log-concave function on $\mathbb R^n$, then there exist a convex function $u:\Omega\rightarrow (-\infty,+\infty]$ such that $f=e^{-u}$.
The log-concave function is closely related to the convex geometry of $\mathbb R^n$. An example of a log-concave function is the characteristic function $\chi_K$ of a convex body $K$ in $\mathbb R^n$, which is defined by
\begin{align}\label{chi-fun}
  \chi_K(x)=e^{-I_K(x)}=\left\{
                                 \begin{array}{ll}
                                   1, & \hbox{if \,\, $x\in K$;} \\
                                   0, & \hbox{if \,\, $x\notin K$,}
                                 \end{array}
                               \right.
\end{align}
where $I_K$ is a lower semi-continuous convex function, and the indicator function of $K$ is,
\begin{align}\label{ind-fun}
I_K(x)=\left\{
               \begin{array}{ll}
                 0, & \hbox{if \,\, $x\in K$;} \\
                 \infty, & \hbox{if \,\, $x\notin K$.}
               \end{array}
             \right.
\end{align}

In the later sections, we also use $f$ to denote  $f$ been extended to $\mathbb R^n$.
\begin{align}
  \overline{f}=\left\{
                 \begin{array}{ll}
                   f, & \hbox{$x\in \Omega$;} \\
                  0, & \hbox{$x\in\mathbb R^n/\Omega$.}
                 \end{array}
               \right.
\end{align}

Let $\mathcal A=\big\{f:\mathbb R^n\rightarrow (0,+\infty]:\,\,f=e^{-u}, u\in \mathcal L\big\}$ be the subclass of $f$ in $\mathbb R^n$. The addition and multiplication by nonnegative scalars in $\mathcal A$ is defined by (see \cite{col-fra-the2013}).
\begin{defn}
Let $f=e^{-u}$, $g=e^{-v}\in \mathcal A$, and $\alpha, \beta\geq 0$. The sum and multiplication of $f$ and $g$ is defined as
\begin{align*}
  \alpha\cdot f\oplus \beta\cdot g=e^{-[(u\alpha)\Box(v\beta)]}.
\end{align*}
That means
\begin{align}\label{f-plu-g}
\big(\alpha\cdot f\oplus \beta\cdot g\big)(x)=\sup_{y\in\mathbb R^n}f\Big(\frac{x-y}{\alpha}\Big)^\alpha g\Big(\frac{y}{\beta}\Big)^\beta.
\end{align}
\end{defn}
In particularly, when $\alpha=0$ and $\beta>0$, we have $(\alpha\cdot f\oplus \beta\cdot g)(x)=g(\frac{x}{\beta})^{\beta}$; when $\alpha >0$ and $\beta=0$, then $(\alpha\cdot f\oplus \beta\cdot g)(x)=f(\frac{x}{\alpha})^{\alpha}$;  finally, when $\alpha=\beta=0$, we have $\big(\alpha\cdot f\oplus \beta\cdot g\big)=I_{\{0\}}$.

The following Lemma is obtained in \cite{col-fra-the2013}.
\begin{lem}[\cite{col-fra-the2013}]\label{con-for-u}
  Let $u\in \mathcal L$, then there exist constants $a$ and $b$, with $a>0$, such that, for $x\in \Omega$
\begin{align}
  u(x)\geq a\|x\|+b.
\end{align}
Moreover $u^*$ is proper, and satisfies $u^*(y)>-\infty$, $\forall y\in\Omega$.
\end{lem}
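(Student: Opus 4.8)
The plan is to extract the linear lower bound directly from the defining property $\lim_{\|x\|\to+\infty}u(x)=+\infty$ together with the convexity of $u$, and then to read off the two statements about $u^*$ from that bound. First I would record that $\inf u$ is finite and attained: by coercivity there is a radius $R_0$ outside which $u$ exceeds the value $u(x_1)$ at any chosen finite point $x_1\in\Omega$, so the global infimum coincides with the infimum over the closed ball of radius $R_0$; on that compact ball lower semicontinuity forces $u$ to attain a finite minimum. Hence there is a point $x_0\in\Omega$ with $m:=u(x_0)=\inf u>-\infty$.

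The core step is the growth estimate, which I would obtain by comparing $u$ along rays emanating from the minimizer $x_0$. Using coercivity once more, I would fix $R>0$ so large that $u(x)\geq m+1$ whenever $\|x-x_0\|\geq R$. Given any $x$ with $\|x-x_0\|\geq R$, the point $z=x_0+R(x-x_0)/\|x-x_0\|$ lies on the sphere of radius $R$ about $x_0$ and is the convex combination $z=(1-t)x_0+tx$ with $t=R/\|x-x_0\|\in(0,1]$. Convexity then gives $m+1\leq u(z)\leq(1-t)m+t\,u(x)$, which rearranges to $u(x)\geq m+\|x-x_0\|/R$. Converting $\|x-x_0\|$ into $\|x\|$ by the triangle inequality, and absorbing the behavior on the bounded set $\{\|x-x_0\|<R\}$ (where $u\geq m$) into the additive constant, I arrive at $u(x)\geq a\|x\|+b$ on all of $\Omega$ with $a=1/R>0$ and a suitable $b$.

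Finally I would deduce the assertions about $u^*$ from this bound together with properness of $u$. Since $u(x_0)<+\infty$, the definition $u^*(y)=\sup_x\{\langle x,y\rangle-u(x)\}$ gives $u^*(y)\geq\langle x_0,y\rangle-u(x_0)>-\infty$ for every $y$, so $u^*$ never takes the value $-\infty$. To see that $u^*$ is not identically $+\infty$, I would use the linear bound: for $\|y\|<a$ one has $\langle x,y\rangle-u(x)\leq(\|y\|-a)\|x\|-b\leq -b$ for all $x$, whence $u^*(y)\leq -b<+\infty$. Thus $u^*$ is finite on a neighborhood of the origin and never equals $-\infty$, i.e. $u^*$ is proper, which also yields $u^*(y)>-\infty$ on $\Omega$. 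The main obstacle is the growth estimate of the second paragraph: one must pass carefully from a bound expressed in $\|x-x_0\|$ to one in $\|x\|$ and treat the bounded region around $x_0$ separately, since convexity by itself controls $u$ only away from the minimizer.
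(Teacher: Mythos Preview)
Your argument is correct and is the standard way to derive a linear lower bound from convexity plus coercivity. Note, however, that the present paper does not supply its own proof of this lemma: it is quoted verbatim from Colesanti--Fragal\`a \cite{col-fra-the2013}, so there is no in-paper argument to compare against. One small wording issue: in the first paragraph you appeal to properness of $u$ implicitly when you choose $x_1\in\Omega$ with $u(x_1)<+\infty$; it is worth saying that such a point exists because $u\in\mathcal L$ is proper, since otherwise the compactness-plus-lsc step would only yield $\inf u=+\infty$.
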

The Lemma \ref{con-for-u} grants that $\mathcal L$ is  closed under the operations of infimal convolution and right scalar multiplication defined in (\ref{inf-con}) and (\ref{rig-mul}) are closed.

\begin{prop}[\cite{col-fra-the2013}]\label{clo-uv}
Let $u$ and $v$ belong both to the same class $\mathcal L$, and $\alpha,\,\,\beta\geq 0$. Then $u\alpha \Box v\beta$  belongs to the same class as $u$ and $v$.
\end{prop}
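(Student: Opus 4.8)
The plan is to verify directly that $w:=(u\alpha)\Box(v\beta)$ meets the three defining requirements of the class $\mathcal{L}$: convexity, lower semicontinuity, and the growth condition $\lim_{\|x\|\to\infty}w(x)=+\infty$. First I would dispose of the degenerate cases. When $\alpha=0$ one has $u\alpha=I_{\{0\}}$, and since $I_{\{0\}}(x-y)$ is finite only at $y=x$, the infimal convolution collapses to $(I_{\{0\}}\Box(v\beta))(x)=(v\beta)(x)$; symmetrically for $\beta=0$, while $\alpha=\beta=0$ gives $I_{\{0\}}$ itself, which is trivially in $\mathcal{L}$. Thus it suffices to treat $\alpha,\beta>0$, for which I first record that right scalar multiplication preserves $\mathcal{L}$: if $u\in\mathcal{L}$ and $\alpha>0$ then $(u\alpha)(x)=\alpha u(x/\alpha)$ is convex and lower semicontinuous (composition with the linear map $x\mapsto x/\alpha$ followed by multiplication by $\alpha>0$), and $\|x/\alpha\|\to\infty$ as $\|x\|\to\infty$ forces $(u\alpha)(x)\to+\infty$. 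Hence $p:=u\alpha$ and $q:=v\beta$ both lie in $\mathcal{L}$, and the task reduces to showing $p\Box q\in\mathcal{L}$.

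The growth condition and convexity are the routine parts. By Lemma \ref{con-for-u} applied to $p$ and $q$ there are constants $a_1,a_2>0$ and $b_1,b_2$ with $p(x)\ge a_1\|x\|+b_1$ and $q(x)\ge a_2\|x\|+b_2$ on all of $\mathbb{R}^n$ (with the convention $+\infty$ off the domains). Writing $a=\min\{a_1,a_2\}$ and using $\|x-y\|+\|y\|\ge\|x\|$, I obtain
\[
(p\Box q)(x)=\inf_{y}\bigl\{p(x-y)+q(y)\bigr\}\ge \inf_{y}\bigl\{a_1\|x-y\|+a_2\|y\|\bigr\}+b_1+b_2\ge a\|x\|+b_1+b_2,
\]
which simultaneously shows $p\Box q>-\infty$ everywhere and $\lim_{\|x\|\to\infty}(p\Box q)(x)=+\infty$. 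Convexity is the standard infimal--convolution computation: for $x_1,x_2$, $t\in[0,1]$ and near-optimal $y_1,y_2$, joint convexity of $(z,y)\mapsto p(z-y)+q(y)$ gives $(p\Box q)(tx_1+(1-t)x_2)\le t\,[p(x_1-y_1)+q(y_1)]+(1-t)\,[p(x_2-y_2)+q(y_2)]$, and passing to the infimum yields convexity. Equivalently, one may invoke $(p\Box q)^*=p^*+q^*$ from the first Proposition, the right--hand side being manifestly convex.

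The main obstacle is lower semicontinuity, since the infimal convolution of lower semicontinuous functions is in general not lower semicontinuous; here the growth estimates above are exactly what rescues the argument. For fixed $x$ the map $y\mapsto p(x-y)+q(y)$ is lower semicontinuous and, by the bound $p(x-y)+q(y)\ge a_1\|x-y\|+a_2\|y\|+b_1+b_2$, coercive in $y$, so the infimum defining $(p\Box q)(x)$ is attained. To prove lower semicontinuity, take $x_n\to x$ and assume $\liminf_n(p\Box q)(x_n)=L<\infty$ (otherwise the desired inequality is trivial); after passing to a subsequence I may take the limit to equal $L$ and choose minimizers $y_n$ with $(p\Box q)(x_n)=p(x_n-y_n)+q(y_n)$. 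The linear lower bounds force $b_1+a_2\|y_n\|+b_2\le p(x_n-y_n)+q(y_n)$ to stay bounded, so $\{y_n\}$ is bounded; extracting a convergent subsequence $y_n\to y$ and using lower semicontinuity of $p$ and $q$ gives $(p\Box q)(x)\le p(x-y)+q(y)\le\liminf_n[p(x_n-y_n)+q(y_n)]=L$. Hence $p\Box q$ is lower semicontinuous, and being also convex, proper (it is finite on $dom(p)+dom(q)\neq\emptyset$ by item (3) of the first Proposition), and coercive, it belongs to $\mathcal{L}$, which completes the proof.
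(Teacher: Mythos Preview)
Your proof is correct. Note, however, that the paper does not supply its own proof of this proposition: it is stated with a citation to Colesanti and Fragal\`{a} \cite{col-fra-the2013} and no argument is given in the present paper. So there is nothing to compare against here beyond observing that you have furnished a complete self-contained verification of a result the authors chose to quote.

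A couple of minor remarks on presentation. Your appeal to Lemma~\ref{con-for-u} is exactly the right device, and it is worth noting that this is precisely how Colesanti and Fragal\`{a} organize the argument in their paper as well: the affine lower bound is what makes the infimal convolution well behaved, both forcing coercivity and allowing the compactness step in the lower semicontinuity proof. Your handling of the lower semicontinuity is the genuinely nontrivial part and is done carefully; the only small thing to watch is that in the bound $b_1+a_2\|y_n\|+b_2\le p(x_n-y_n)+q(y_n)$ you are implicitly using $p(x_n-y_n)\ge a_1\|x_n-y_n\|+b_1\ge b_1$, which is fine but could be said explicitly.
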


Let $f\in \mathcal A$, according to papers of \cite{avi-mil-a2010,rot-on2012},  the support function of $f=e^{-u}$ is defined as,
\begin{align}
  h_f(x)=(-\log f(x))^*=u^*(x),
\end{align}
here the $u^*$ is the Legendre transform of $u$. The definition of $h_f$ is a proper generalization of the support function $h_K$. In fact, one can easily checks $h_{\chi_K}=h_K$.
Obviously, the support function $h_f$ share the most of the important properties of support functions $h_K$. Specifically, it is easy to check that the function $h:\mathcal A\rightarrow \mathcal L$ has the following properties \cite{rot-sup2013}:
\begin{enumerate}
  \item  $h$ is a bijective map from $\mathcal A\rightarrow \mathcal L.$
  \item $h$ is order preserving: $f\leq g$ if and only if $h_{f}\leq h_{g}.$
  \item $h$ is additive: for every $f, g\in \mathcal A$ we have $h_{f\oplus g}=h_f+h_g.$
\end{enumerate}

The following proposition shows that $h_f$ is $GL(n)$ covariant.
\begin{prop}[\cite{fan-zho-lyz2018}]
  Let $f\in \mathcal A$. For $A\in GL(n)$ and $x\in \mathbb R^n$, then
  \begin{align}
    h_{f\circ A}(x)=h_f(A^{-t}x).
\end{align}
\end{prop}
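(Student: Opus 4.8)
The plan is to reduce the identity directly to a linear change of variables inside the definition of the Fenchel conjugate. First I would write $f=e^{-u}$ with $u\in\mathcal L$, so that $f\circ A=e^{-(u\circ A)}$, and by the very definition of the support function $h_{f\circ A}(x)=(-\log(f\circ A)(x))^*=(u\circ A)^*(x)$. Thus the whole statement is equivalent to the purely convex-analytic claim $(u\circ A)^*(x)=u^*(A^{-t}x)$, and everything reduces to understanding how the Legendre--Fenchel transform interacts with precomposition by an invertible linear map.

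Next I would expand the left-hand side using the definition \eqref{leg-con}:
\begin{align*}
(u\circ A)^*(x)=\sup_{y\in\mathbb R^n}\big\{\langle x,y\rangle-u(Ay)\big\}.
\end{align*}
Then I would perform the substitution $z=Ay$. Because $A\in GL(n)$ is invertible, the map $y\mapsto Ay$ is a bijection of $\mathbb R^n$ onto itself, so the supremum over all $y$ is exactly the supremum over all $z$; no values are gained or lost. Writing $y=A^{-1}z$ and using the defining property of the transpose, $\langle x,A^{-1}z\rangle=\langle A^{-t}x,z\rangle$, the expression becomes
\begin{align*}
(u\circ A)^*(x)=\sup_{z\in\mathbb R^n}\big\{\langle A^{-t}x,z\rangle-u(z)\big\}=u^*(A^{-t}x)=h_f(A^{-t}x),
\end{align*}
which is the desired identity.

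There is no serious obstacle here: the argument is a one-line change of variables, and the only point that genuinely requires care is the invertibility of $A$, which is precisely the hypothesis $A\in GL(n)$. It is invertibility that guarantees the substitution $z=Ay$ is a genuine bijection of $\mathbb R^n$, so that the supremum is preserved; the accompanying identity $\langle x,A^{-1}z\rangle=\langle A^{-t}x,z\rangle$ is simply the definition of the adjoint and needs no further justification. I would remark that Proposition~\ref{leg-con-par} is not needed for this computation, since the identity is an elementary consequence of the definition of the conjugate and does not rely on smoothness or the Legendre-type structure of $u$.
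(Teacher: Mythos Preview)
Your argument is correct: the identity $(u\circ A)^*(x)=u^*(A^{-t}x)$ is indeed an immediate consequence of the bijective change of variables $z=Ay$ in the defining supremum, together with the adjoint relation $\langle x,A^{-1}z\rangle=\langle A^{-t}x,z\rangle$. Note, however, that the paper does not supply its own proof of this proposition; it is quoted from \cite{fan-zho-lyz2018} and stated without argument. Your computation is the standard one and is exactly what one would expect any proof of this fact to be, so there is nothing to compare against here.
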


Let $u,\,v\in \mathcal L$, denote by $u_t=u\Box vt$ $(t>0)$, and $f_t=e^{-u_t}$. The following Lemmas describe the monotonous and convergence of $u_t$ and $f_t$, respectively.
\begin{lem}[\cite{col-fra-the2013}]\label{mon-ut-ft}
  Let $f=e^{-u}$, $g=g^{-v}\in \mathcal A$. For $t>0$, set $u_t=u\square (vt)$ and $f_t=e^{-u_t}$. Assume that $v(0)=0$, then for every fixed $x\in\mathbb R^n$, $u_t(x)$ and $f_t(x)$ are respectively pointwise decreasing and increasing with respect to $t$; in particular it holds
\begin{align}
  u_1(x)\leq u_t(x)\leq u(x) \,\,\,\mbox{and}\,\, f(x)\leq f_t(x)\leq f_1(x)\,\,\,\,\forall x\in \mathbb R^n \,\,\, \forall t\in [0, 1].
\end{align}
\end{lem}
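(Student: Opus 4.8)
The plan is to prove the monotonicity by passing to the Fenchel conjugates, where the infimal convolution and right scalar multiplication become an ordinary sum and scalar multiple, so that the $t$-dependence becomes manifestly linear. Concretely, using the identities $(u\Box v)^*=u^*+v^*$ and $(v\alpha)^*=\alpha v^*$ recorded in the first proposition of this section, together with the definition $u_t=u\Box(vt)$, I would first compute
\begin{align*}
u_t^*=\big(u\Box(vt)\big)^*=u^*+(vt)^*=u^*+t\,v^*.
\end{align*}
Thus the only $t$-dependent term on the conjugate side is $t\,v^*$, and the whole problem reduces to determining the sign of $v^*$.

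The sign of $v^*$ is controlled precisely by the normalization $v(0)=0$. Indeed, directly from the definition (\ref{leg-con}),
\begin{align*}
v^*(y)=\sup_{x\in\mathbb R^n}\big\{\langle x,y\rangle-v(x)\big\}\geq \langle 0,y\rangle-v(0)=0
\end{align*}
for every $y$, so $v^*\geq 0$ pointwise. By Lemma \ref{con-for-u}, $v^*$ is moreover proper, so it is a genuine nonnegative convex function and the additive splitting above carries no $\infty-\infty$ ambiguity. Consequently, for fixed $y$ the map $t\mapsto u_t^*(y)=u^*(y)+t\,v^*(y)$ is nondecreasing in $t$; and for $0\leq t\leq 1$ one has the sandwich $u^*(y)\leq u_t^*(y)\leq u_1^*(y)$, since $0\leq t\,v^*(y)\leq v^*(y)$.

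Next I would invert this through the Fenchel transform. Because $u,v\in\mathcal{L}$ and $\mathcal{L}$ is closed under the operations involved (Proposition \ref{clo-uv}), each $u_t$ is a proper lower semicontinuous convex function, whence $u_t=u_t^{**}=(u^*+t\,v^*)^*$. The Fenchel conjugate is order-reversing: if $\phi\leq\psi$ then $\phi^*\geq\psi^*$, directly from (\ref{leg-con}). Applying this to the monotone family $u_t^*$ shows that $t\mapsto u_t(x)$ is pointwise nonincreasing, and applying it to the sandwich for $u_t^*$ gives, for $0\leq t\leq 1$,
\begin{align*}
u_1(x)=\big(u_1^*\big)^*(x)\leq u_t(x)\leq (u^*)^*(x)=u(x),
\end{align*}
where at the right endpoint I use $u_{0}=u\Box(v\,0)=u\Box I_{\{0\}}=u$ from the definition (\ref{rig-mul}), consistent with $u_0^*=u^*$. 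Since $f_t=e^{-u_t}$ and $s\mapsto e^{-s}$ is decreasing, the monotonicity of $u_t$ transfers to $f_t$ with the reversed direction, giving that $f_t$ is pointwise nondecreasing and $f(x)\leq f_t(x)\leq f_1(x)$ on $[0,1]$.

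The only points requiring genuine care are the legitimacy of the conjugate identities and of $u_t^{**}=u_t$: these rest on $u$, $v$, and $u_t$ being proper lower semicontinuous convex functions of the class $\mathcal{L}$, which is guaranteed by Lemma \ref{con-for-u} and Proposition \ref{clo-uv}, and on the exactness of $(u\Box(vt))^*=u^*+t\,v^*$, which is exactly items (1)--(2) of the first proposition. I expect the main (modest) obstacle to be bookkeeping the $t=0$ endpoint and confirming the properness of $v^*$, so that $u_t^*=u^*+t\,v^*$ holds everywhere; once that is settled, the monotonicity is immediate from the nonnegativity of $v^*$.
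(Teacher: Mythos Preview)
The paper does not give its own proof of this lemma: it is stated with a citation to Colesanti--Fragal\`a \cite{col-fra-the2013} and used as a black box. So there is no in-paper argument to compare against.

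Your proof is correct. Passing to conjugates via $(u\Box(vt))^*=u^*+t\,v^*$ and observing $v^*\ge 0$ from $v(0)=0$ turns the monotonicity into a triviality, and the order-reversal of the Legendre transform together with $u_t^{**}=u_t$ (valid since $u_t\in\mathcal L$ is proper and lower semicontinuous by Proposition~\ref{clo-uv}) brings it back. For the record, a direct argument is equally short and avoids the biconjugate step: for $0<s<t$, convexity of $v$ with $v(0)=0$ gives $v\big(\tfrac{s}{t}\,w\big)\le \tfrac{s}{t}\,v(w)$, hence $t\,v(y/t)\le s\,v(y/s)$ for every $y$, so $u(x-y)+s\,v(y/s)\ge u(x-y)+t\,v(y/t)\ge u_t(x)$ and taking the infimum over $y$ yields $u_s(x)\ge u_t(x)$. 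Either route settles the lemma; yours has the virtue of making the endpoint $t=0$ transparent via $u_0^*=u^*$.
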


\begin{lem}[\cite{col-fra-the2013}]\label{lem-lim-uv}
  Let $u$ and $v$ belong both to the same class $\mathcal L$ and, for any $t>0$, set $u_t:=u\Box (vt)$. Assume that $v(0)=0$, then
\begin{enumerate}
  \item $\forall x\in \Omega$, $\lim\limits_{t\rightarrow 0^+}u_t(x)=u(x)$;
  \item $\forall E\subset \subset \Omega$, $\lim\limits_{t\rightarrow 0^+}\nabla u_t(x)=\nabla u$ uniformly on $E$.
\end{enumerate}
\end{lem}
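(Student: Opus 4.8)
The plan is to pass to Fenchel conjugates, where the infimal convolution and the scalar multiplication both linearize. By the elementary properties of conjugation and scalar multiplication recorded in Section~2, namely $(u\Box w)^*=u^*+w^*$ and $(vt)^*=t\,v^*$, the conjugate of $u_t=u\Box(vt)$ is
\begin{align}\label{ut-star}
u_t^*=u^*+t\,v^*.
\end{align}
Two features of $v^*$ drive the argument. First, since $v(0)=0$ we have $v^*(y)=\sup_z\{\langle y,z\rangle-v(z)\}\geq -v(0)=0$, so $v^*\geq 0$; this makes $t\mapsto u_t^*$ pointwise nondecreasing and reproves the monotonicity of Lemma~\ref{mon-ut-ft}. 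Second, and crucially, $v^*$ is finite on all of $\mathbb R^n$, i.e.\ $v$ is superlinear; this is the property that forces the rescaled summand $tv(\cdot/t)$ to concentrate at the origin as $t\to 0^+$ (to epi-converge to $I_{\{0\}}$, the neutral element of $\Box$), and it is exactly what the statement needs (for merely coercive $v$, e.g.\ $v=\|\cdot\|$, one has $u_t=u\Box\|\cdot\|$ independent of $t$).

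For part (1) I would conjugate back. Since $u_t\in\mathcal L$ is convex and lower semicontinuous (Proposition~\ref{clo-uv}) and $u^{**}=u$, identity \eqref{ut-star} gives, for every $x$,
\begin{align}\label{ut-sup}
u_t(x)=(u_t^*)^*(x)=\sup_{y\in\mathbb R^n}\big\{\langle x,y\rangle-u^*(y)-t\,v^*(y)\big\}.
\end{align}
As $t\downarrow 0$ the bracket increases pointwise in $y$ to $\langle x,y\rangle-u^*(y)$ (here $v^*\geq 0$ and finite is used), so the supremum increases to $\sup_y\{\langle x,y\rangle-u^*(y)\}=u^{**}(x)=u(x)$. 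Hence $u_t(x)\uparrow u(x)$ for every $x\in\Omega$; only monotone convergence of an increasing family of suprema is invoked, so no delicate interchange of limits arises.

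For part (2) I would deduce gradient convergence from the pointwise convergence just proved together with convexity. On $\mathrm{int}(\Omega)$ the $u_t$ and $u$ are finite and convex, and $u$ is differentiable (of Legendre type, Proposition~\ref{leg-con-par}); the standard fact that pointwise convergence of finite convex functions on an open convex set upgrades to uniform convergence on compact subsets and to convergence of gradients at points of differentiability then yields $\nabla u_t\to\nabla u$ uniformly on any $E\subset\subset\Omega$. More explicitly, let $y_t=y_t(x)$ realize $u_t(x)=u(y_t)+(vt)(x-y_t)$. The split-point rule for infimal convolutions gives $\nabla u_t(x)=\nabla u(y_t)=\nabla(vt)(x-y_t)$, so it suffices to show $y_t\to x$ uniformly on $E$. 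Here superlinearity enters quantitatively: for each $R>0$ there is $C_R$ with $v(z)\geq R\|z\|-C_R$, whence $(vt)(x-y_t)\geq R\|x-y_t\|-tC_R$; comparing with $u_t(x)\leq u(x)$ and the lower bound $u\geq b$ of Lemma~\ref{con-for-u} yields $\|x-y_t\|\leq (u(x)-b+tC_R)/R$, uniformly for $x\in E$ since $u$ is bounded on the compact set $E$. Letting first $t\to0$ and then $R\to\infty$ forces $\sup_{x\in E}\|y_t(x)-x\|\to0$, and uniform continuity of $\nabla u$ on a compact neighborhood of $E$ finishes the argument.

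The main obstacle is precisely the concentration of the minimizers, $y_t(x)\to x$, equivalently the finiteness of $v^*$ used in \eqref{ut-sup}: this is the single analytic input that separates the true statement from its false merely-coercive version, and it is also the step that must be made uniform in $x$ for part (2). Once concentration is in hand, part (1) is immediate from \eqref{ut-sup} and part (2) is a soft consequence of convexity together with the continuity of $\nabla u$.
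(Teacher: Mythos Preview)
The paper does not supply its own proof of this lemma: it is quoted verbatim from Colesanti--Fragal\`a \cite{col-fra-the2013} and stated without argument, so there is no in-paper proof to compare against. What I can do is assess your argument on its merits.

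Your strategy---linearize by passing to conjugates via $u_t^*=u^*+t\,v^*$, then use monotone convergence of the suprema for part~(1), and control the drift of the inf-convolution minimizer $y_t(x)$ for part~(2)---is the standard one and is carried out cleanly. The split-point identity $\nabla u_t(x)=\nabla u(y_t)$ together with your coercivity bound $\|x-y_t\|\leq (u(x)-b+tC_R)/R$ is exactly the right mechanism, and the two-step limit (first $t\to0$, then $R\to\infty$) is handled correctly.

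The one point worth flagging is the hypothesis you lean on. You explicitly use that $v^*$ is finite on all of $\mathbb R^n$, i.e.\ that $v$ is superlinear, and you correctly observe that the conclusion is \emph{false} without it (your $v=\|\cdot\|$ example). In the notation of this paper that is the class $\mathcal L'$ (where $u(x)/\|x\|\to+\infty$), not the class $\mathcal L$ (merely coercive) under which the lemma is nominally stated here. So your proof is correct for $v\in\mathcal L'$---which is the regime in which the paper actually applies the lemma, and which matches the hypotheses in the original Colesanti--Fragal\`a result---but you should be aware that you are tacitly strengthening the hypothesis beyond what the displayed statement in this paper literally says. This is a defect of the paper's transcription of the lemma, not of your argument. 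A minor secondary point: in part~(2) you also use continuity of $\nabla u$ on a neighborhood of $E$; this again sits in $\mathcal L'$ (where $u\in\mathcal C^2_+$) rather than in bare $\mathcal L$.
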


\begin{lem}[\cite{col-fra-the2013}]
  Let $u$ and $v$ belong both to the same class $\mathcal L$ and for any $t>0$, let $u_t:=u\Box (vt)$. Then $\forall x\in int(\Omega_t)$, and $\forall t>0$,
\begin{align}
\frac{d}{dt}\big(u_t(x)\big)=-\psi(\nabla u_t(x)),
\end{align}
where $\psi:=v^*$.
\end{lem}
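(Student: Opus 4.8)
The plan is to move to Fenchel conjugates, where the dependence on $t$ becomes affine, and then recover the derivative by an envelope argument. First I would apply the conjugation rules $(u\Box w)^{*}=u^{*}+w^{*}$ and $(vt)^{*}=t\,v^{*}$ (Proposition~2.1) to obtain
\begin{align*}
u_t^{*}=\big(u\Box (vt)\big)^{*}=u^{*}+(vt)^{*}=u^{*}+t\,v^{*}=u^{*}+t\,\psi ,
\end{align*}
so that $u_t^{*}(y)=u^{*}(y)+t\,\psi(y)$ is \emph{affine} in $t$ for each fixed $y$. By Proposition~\ref{clo-uv} the function $u_t$ again belongs to $\mathcal L$, hence it is proper, lower semicontinuous and convex, so $u_t^{**}=u_t$ and
\begin{align*}
u_t(x)=\sup_{y\in\mathbb R^n}\big\{\langle x,y\rangle-u_t^{*}(y)\big\}
=\sup_{y\in\mathbb R^n}\big\{\langle x,y\rangle-u^{*}(y)-t\,\psi(y)\big\}.
\end{align*}
In particular $t\mapsto u_t(x)$ is a supremum of functions affine in $t$, hence convex in $t$, which already secures the existence of one-sided derivatives at every $t>0$. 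I note that this route needs neither the normalisation $v(0)=0$ nor the monotonicity of Lemma~\ref{mon-ut-ft}.

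Next I would pin down the maximizer. Fix $x\in int(\Omega_t)$. Since $u_t\in\mathcal L$, Proposition~\ref{leg-con-par} shows that $(int(\Omega_t),u_t)$ is of Legendre type, so $\nabla u_t$ is a continuous bijection of $int(\Omega_t)$ onto $int(dom(u_t^{*}))$ with inverse $\nabla u_t^{*}$. The stationarity condition $x=\nabla u_t^{*}(y)$ for the supremum above then has the unique solution $y=\nabla u_t(x)$, which lies in $int(dom(u_t^{*}))$; equivalently, the Legendre equality~\eqref{leg-equ} reads
\begin{align*}
u_t(x)=\langle x,\nabla u_t(x)\rangle-u_t^{*}\big(\nabla u_t(x)\big).
\end{align*}
Writing $G(t,y)=\langle x,y\rangle-u^{*}(y)-t\,\psi(y)$, the maximizer of $G(t,\cdot)$ is unique, so the envelope (Danskin) principle for a family that is affine in the parameter gives
\begin{align*}
\frac{d}{dt}u_t(x)=\partial_t G\big(t,\nabla u_t(x)\big)=-\psi\big(\nabla u_t(x)\big),
\end{align*}
the asserted identity.

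The delicate point — and the main obstacle — is to certify that the motion of the maximizer $p(t):=\nabla u_t(x)$ produces no first-order contribution. If one simply differentiates the Legendre-equality expression, treating $p(t)$ as differentiable, one finds
\begin{align*}
\frac{d}{dt}u_t(x)=\big\langle\, x-\nabla u^{*}(p(t))-t\,\nabla\psi(p(t)),\ \dot p(t)\,\big\rangle-\psi\big(p(t)\big),
\end{align*}
and the bracket vanishes identically because
\begin{align*}
\nabla u^{*}(p)+t\,\nabla\psi(p)=\nabla\big(u^{*}+t\,v^{*}\big)(p)=\nabla u_t^{*}(p)=\nabla u_t^{*}\big(\nabla u_t(x)\big)=x ,
\end{align*}
again by the inverse-gradient statement of Proposition~\ref{leg-con-par}. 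Thus the coefficient of $\dot p(t)$ is zero and the final formula never uses $\dot p(t)$; this is precisely the content of the envelope argument and it lets me sidestep proving differentiability of $t\mapsto\nabla u_t(x)$. To make the passage fully rigorous I would combine the convexity of $t\mapsto u_t(x)$ established above (yielding left and right derivatives everywhere) with the uniqueness of the maximizer and the continuity of $t\mapsto\nabla u_t(x)$ coming from Legendre duality together with the gradient-convergence information in Lemma~\ref{lem-lim-uv}; Danskin's theorem then promotes the one-sided derivatives to a genuine two-sided derivative equal to $-\psi(\nabla u_t(x))$.
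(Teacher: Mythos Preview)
Your proposal is correct and follows essentially the same route as the paper's own argument (given here in the proof of Lemma~\ref{lem-duvp}, the projected version; the un-projected lemma is merely cited from \cite{col-fra-the2013}). Both pass to conjugates to obtain $u_t^{*}=u^{*}+t\psi$, invoke Legendre duality (Proposition~\ref{leg-con-par}) to identify the maximizer as $p(t)=\nabla u_t(x)$, write the Legendre equality $u_t(x)=\langle x,p(t)\rangle-\varphi_t(p(t))$ with $\varphi_t=u^{*}+t\psi$, and then observe that the stationarity condition $\nabla\varphi_t(p(t))=x$ kills the contribution of the moving maximizer, leaving only $-\psi(p(t))$. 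The one substantive difference is how the differentiation step is justified: the paper first applies the implicit function theorem to $\nabla\varphi(y)+t\nabla\psi(y)-x=0$ (using that $\varphi_t\in\mathcal C^2_+$, so $\nabla^2\varphi_t$ is nonsingular) to conclude that $t\mapsto p(t)$ is $\mathcal C^1$, and then differentiates the Legendre equality directly; you instead package the same cancellation as Danskin's envelope theorem, combining convexity of $t\mapsto u_t(x)$ with uniqueness and continuity of the maximizer to get the derivative without ever needing $\dot p(t)$. Your route is a bit more robust in that it does not rely on the $\mathcal C^2_+$ hypothesis for $\varphi_t$, while the paper's implicit-function-theorem route yields the extra information that $t\mapsto\nabla u_t(x)$ is actually $\mathcal C^1$.
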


\section{Projection of functions onto linear subspace}


Let $\mathcal G_{i,n}$ $(0\leq i\leq n)$ be the Grassmannian manifold of $i$-dimensional linear subspace of $\mathbb R^n$. The elements of $\mathcal G_{i,n}$ will usually be denoted by $\xi_i$ and, $\xi^\perp_i$ stands for the orthogonal complement of $\xi_i$ which is a $(n-i)$-dimensional subspace of $\mathbb R^n$.
Let $\xi_i\in \mathcal G_{i,n}$ and $f:\mathbb R^n\rightarrow \mathbb R$. The projection of $f$ onto $\xi_i$ is defined by (see  \cite{kla-mil-geo2005,gut-avi-mer-jim-vil-rog2019})
\begin{align}\label{pro-f}
  f|_{\xi_i}(x):=\max\{f(y): y\in x+{\xi_i}^\perp\},\, \,\,\,\,\,\, \forall x\in \Omega|_{\xi_i}.
\end{align}
where $\xi_i^\perp$ is the orthogonal complement of $\xi_i$ in $\mathbb R^n$, $\Omega|_{\xi_i}$ is the projection of $\Omega$ onto $\xi_i$.
By the definition of the log-concave function $f=e^{-u}$, for every $x\in \Omega|_{\xi_i}$,
one can rewrite (\ref{pro-f}) as
\begin{align}\label{pro-u}
  f|_{\xi_i}(x)=\exp\Big\{\max\{-u(y):y\in x+\xi_i^\perp\}\Big\}=e^{-u|_{\xi_i}}(x).
\end{align}
Regards the the ``sum'' and ``multiplication'' of $f$, we say that the projection keeps the structure on $\mathbb R^n$. In other words, we have the following Proposition.
\begin{prop}\label{f-add-mul}
 Let $f,\,\,g\in \mathcal A$, $\xi_i\in\mathcal G_{i,n}$ and $\alpha, \beta>0$. Then
\begin{align}
(\alpha\cdot f\oplus \beta \cdot g)|_{\xi_i}=\alpha\cdot f|_{\xi_i}\oplus\beta\cdot g|_{\xi_i}.
\end{align}
\end{prop}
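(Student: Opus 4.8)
The plan is to prove the identity pointwise on $\xi_i$ by unfolding both sides into suprema over $\mathbb{R}^n$ of products of rescaled values of $f$ and $g$, and then matching the two suprema by a linear change of variables adapted to the orthogonal splitting $\mathbb{R}^n=\xi_i\oplus\xi_i^\perp$. It is convenient to work throughout with the extensions $\overline f,\overline g$ that vanish off the respective domains, so that the projection (\ref{pro-f}) reads $\overline f|_{\xi_i}(p)=\sup_{a\in\xi_i^\perp}\overline f(p+a)$ for every $p\in\xi_i$, and likewise for $g$; this removes all domain bookkeeping, since an equality of the extended functions simultaneously records the equality of their supports, hence of the relevant projected domains on the two sides. (That both the projections and the sum stay in the appropriate classes is guaranteed by Proposition~\ref{clo-uv}.)

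Fix $x\in\xi_i$. For the right-hand side, I would first expand the sum (\ref{f-plu-g}) inside $\xi_i$ and then insert the projection formula, using that $t\mapsto t^{\alpha}$ and $t\mapsto t^{\beta}$ are increasing (here $\alpha,\beta>0$) to pull the two inner suprema outside the powers; this yields
\[
\bigl(\alpha\cdot f|_{\xi_i}\oplus\beta\cdot g|_{\xi_i}\bigr)(x)=\sup_{y\in\xi_i}\ \sup_{a,b\in\xi_i^\perp}\ \overline f\Bigl(\tfrac{x-y}{\alpha}+a\Bigr)^{\alpha}\,\overline g\Bigl(\tfrac{y}{\beta}+b\Bigr)^{\beta}.
\]
For the left-hand side, I would peel off the projection, $\bigl(\alpha\cdot f\oplus\beta\cdot g\bigr)|_{\xi_i}(x)=\sup_{c\in\xi_i^\perp}\bigl(\alpha\cdot f\oplus\beta\cdot g\bigr)(x+c)$, and then expand the sum to obtain a supremum over $c\in\xi_i^\perp$ and $w\in\mathbb{R}^n$ of $\overline f\bigl(\tfrac{x+c-w}{\alpha}\bigr)^{\alpha}\overline g\bigl(\tfrac{w}{\beta}\bigr)^{\beta}$. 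Splitting $w=y+d$ with $y\in\xi_i$ and $d\in\xi_i^\perp$, and noting $x-y\in\xi_i$ while $c-d\in\xi_i^\perp$, this becomes the supremum over $y\in\xi_i$ and $c,d\in\xi_i^\perp$ of $\overline f\bigl(\tfrac{x-y}{\alpha}+\tfrac{c-d}{\alpha}\bigr)^{\alpha}\overline g\bigl(\tfrac{y}{\beta}+\tfrac{d}{\beta}\bigr)^{\beta}$.

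It then remains to observe that $(c,d)\mapsto(a,b):=\bigl(\tfrac{c-d}{\alpha},\tfrac{d}{\beta}\bigr)$ is a linear automorphism of $\xi_i^\perp\times\xi_i^\perp$ — with inverse $d=\beta b$, $c=\alpha a+\beta b$ — precisely because $\alpha,\beta>0$. Hence the two suprema range over exactly the same family of values, the left- and right-hand sides agree at $x$, and since $x\in\xi_i$ was arbitrary the proposition follows. I do not anticipate a genuine obstacle here: suprema commute with suprema, so the only points demanding care are the decomposition $w=y+d$ along $\mathbb{R}^n=\xi_i\oplus\xi_i^\perp$ and the verification that the reparametrization $(c,d)\leftrightarrow(a,b)$ is bijective, which is exactly where the orthogonal-complement structure and the positivity of $\alpha,\beta$ enter. (An alternative, conjugate-side argument would note that $u|_{\xi_i}$ is the infimal convolution of $u$ with $I_{\xi_i^\perp}$, so that $h_{f|_{\xi_i}}=u^*+I_{\xi_i}$ is the restriction of $h_f$ to $\xi_i$, and then invoke the additivity $h_{\alpha\cdot f\oplus\beta\cdot g}=\alpha h_f+\beta h_g$; but this requires separately identifying the projection with the restriction of the support function, so I would favor the direct computation above.)
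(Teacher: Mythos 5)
Your proof is correct and follows essentially the same route as the paper: both arguments unfold the two sides into suprema over the splitting $\mathbb{R}^n=\xi_i\oplus\xi_i^\perp$ and match them, using that $\alpha,\beta>0$ and $f,g\geq 0$ so that powers and products commute with suprema. The only difference is one of execution: you obtain the equality in a single pass via the explicit bijection $(c,d)\mapsto\bigl(\tfrac{c-d}{\alpha},\tfrac{d}{\beta}\bigr)$ on $\xi_i^\perp\times\xi_i^\perp$, whereas the paper proves the two inequalities separately (the $\leq$ direction resting on $\sup\{fg\}\leq\sup\{f\}\sup\{g\}$), which your change of variables makes cleaner but does not change in substance.
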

\begin{proof}
Let $f,\,\,g\in \mathcal A$, let $x_1,\,x_2,\, x\in \xi_i$ such that $x=\alpha x_1+\beta x_2$, then  we have,
\begin{align*}
(\alpha\cdot f\oplus \beta \cdot g)|_{\xi_i}(x)&\geq(\alpha\cdot f\oplus \beta \cdot g)(\alpha x_1+\beta x_2+\xi_i^\perp)\\
&\geq f(x_1+\xi_i^\perp)^\alpha g(x_2+\xi_i^\perp)^\beta.
\end{align*}
Taking the supremum of the second right hand inequality over all $\xi_i^\perp$ we obtain $
(\alpha\cdot f\oplus \beta \cdot g)|_{\xi_i}\geq\alpha \cdot f|_{\xi_i}\oplus \beta\cdot g|_{\xi_i}.$
On the other hand, for $x\in \xi_i$, any $x_1, \,x_2\in \xi_i$ such that $x_1+x_2=x$, then
\begin{align*}
\big(\alpha\cdot f|_{\xi_i}\oplus\beta\cdot g|_{\xi_i}\big)(x)&= \sup_{x_1+x_2=x}\Big\{\max \big\{f^\alpha(\frac{x_1}{\alpha}+\xi_i^\perp)\big\}\max\big\{g^\beta(\frac{x_2}{\beta}+\xi_i^\perp)\big\}\Big\}\\
&\geq  \sup_{x_1+x_2=x}\Big\{\max \Big(f^\alpha(\frac{x_1}{\alpha}+\xi_i^\perp)g^\beta(\frac{x_2}{\beta}+\xi_i^\perp)\Big)\Big\}\\
&=\max\Big\{\sup_{x_1+x_2=x} \Big(f^\alpha(\frac{x_1}{\alpha}+\xi_i^\perp)g^\beta(\frac{x_2}{\beta}+\xi_i^\perp)\Big)\Big\}\\
&=(\alpha\cdot f\oplus \beta \cdot g)|_{\xi_i}(x).
\end{align*}
Here since $f, \,\,g\geq 0$, the inequality $\max\{f\cdot g\}\leq \max\{f\}\cdot\max\{g\}$ holds for $x\in \mathbb R^n$. So we complete  the proof of the result.
\end{proof}
\begin{prop}\label{kep-ord}
  Let $\xi_i\in \mathcal G_{i,n}$, $f$ and $g$ are functions on $\mathbb R^n$, such that $f(x)\leq g(x)$ holds. Then
\begin{align}
f|_{\xi_i}\leq g|_{\xi_i},
\end{align}
holds for any $x\in \xi_i$.
\end{prop}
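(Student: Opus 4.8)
The plan is to reduce the statement directly to the elementary fact that taking a supremum (here a maximum over the fiber $x+\xi_i^\perp$) is a monotone operation. Fix $x\in\xi_i$. By definition (\ref{pro-f}), for a function $F$ on $\mathbb R^n$ we have $F|_{\xi_i}(x)=\max\{F(y):y\in x+\xi_i^\perp\}$, the maximum being taken over exactly the same set of points $y$ regardless of whether $F=f$ or $F=g$. Hence, if $f(y)\le g(y)$ for every $y\in x+\xi_i^\perp$, then every value $f(y)$ entering the first maximum is dominated by the corresponding value $g(y)$ in the second, and therefore
\begin{align*}
f|_{\xi_i}(x)=\max_{y\in x+\xi_i^\perp}f(y)\le \max_{y\in x+\xi_i^\perp}g(y)=g|_{\xi_i}(x).
\end{align*}
Since $x\in\xi_i$ was arbitrary, this gives $f|_{\xi_i}\le g|_{\xi_i}$.

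The only point that needs a word of care is the domain on which the two projected functions are defined: $f|_{\xi_i}$ is a priori defined on $\Omega_f|_{\xi_i}$ and $g|_{\xi_i}$ on $\Omega_g|_{\xi_i}$. Because $f\le g$ and $f>0$ on its domain, we have $\Omega_f\subseteq\Omega_g$, hence $\Omega_f|_{\xi_i}\subseteq\Omega_g|_{\xi_i}$; so for $x\in\Omega_f|_{\xi_i}$ the displayed chain of inequalities makes sense verbatim, while for $x\in(\Omega_g|_{\xi_i})\setminus(\Omega_f|_{\xi_i})$ one uses the extension convention $\overline{f|_{\xi_i}}(x)=0\le g|_{\xi_i}(x)$ recorded after (\ref{ind-fun}). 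In the log-concave notation $f=e^{-u}$, $g=e^{-v}$ the same argument reads $u|_{\xi_i}\ge v|_{\xi_i}$, obtained from $u\ge v$ by the monotonicity of the infimum in (\ref{pro-u}); passing to $e^{-(\cdot)}$ gives the claim, which is just a reformulation of the order-preserving property of $h$ restricted to subspaces.

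I do not expect any real obstacle here: unlike Proposition \ref{f-add-mul}, there is no interaction between the projection and the $\oplus$/scalar-multiplication structure to control, so no constrained supremum needs to be split and no inequality like $\max\{f\cdot g\}\le\max\{f\}\cdot\max\{g\}$ is invoked. The entire content is the monotonicity of $\max$ over a fixed index set, together with the bookkeeping of effective domains described above. Accordingly, I would keep the proof to a couple of lines, essentially the display above preceded by the remark on domains.
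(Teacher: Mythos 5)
Your proof is correct and follows essentially the same route as the paper's: both reduce the claim to the monotonicity of the maximum over the fixed fiber $x+\xi_i^\perp$. The extra remark on effective domains is a reasonable refinement but does not change the argument.
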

\begin{proof}
For $y\in x+\xi_i^\perp$, since $f(y)\leq g(y)$, then $f(y)\leq \max\{g(y): y\in x+\xi_i^\perp\}$. So, $\max\{f(y):y\in x+L^\perp_i\}\leq \max\{g(y):y\in x+\xi_i^\perp\}$.
By the definition of the projection, we complete the proof.
\end{proof}
For the convergence of $f$ we have the following.
\begin{prop}\label{lem-lim-uvp}
Let $\{f_i\}$ are functions such that $\lim\limits_{n\rightarrow \infty}f_n=f_0$, $\xi_i\in\mathcal G_{i,n}$, then $\lim\limits_{n\rightarrow \infty}(f_n|_{\xi_i})=f_0|_{\xi_i}$.
\end{prop}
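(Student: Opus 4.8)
The plan is to reduce the statement to a pointwise one and then establish two one-sided estimates. Fix a point $x$ at which all the projections $f_n|_{\xi_i}$ and $f_0|_{\xi_i}$ are defined, and abbreviate $F_n := f_n|_{\xi_i}$, $F_0 := f_0|_{\xi_i}$. I would show
\[
  F_0(x) \le \liminf_{n\to\infty} F_n(x) \le \limsup_{n\to\infty} F_n(x) \le F_0(x),
\]
which forces $\lim_{n\to\infty} F_n(x) = F_0(x)$, and hence, $x$ being arbitrary, $f_n|_{\xi_i} \to f_0|_{\xi_i}$.

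First I would dispose of the lower estimate, which uses only pointwise convergence: by the definition (\ref{pro-f}), $F_n(x) \ge f_n(y)$ for every $y \in x+\xi_i^\perp$, so $\liminf_n F_n(x) \ge \liminf_n f_n(y) = f_0(y)$; taking the supremum over $y \in x+\xi_i^\perp$ gives $\liminf_n F_n(x) \ge F_0(x)$.

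The upper estimate is the heart of the matter. For each $n$ I would pick a maximizer $y_n \in x+\xi_i^\perp$ with $F_n(x) = f_n(y_n)$ — the maximum being attained because $f_n = e^{-u_n}$ with $u_n \in \mathcal L$ tends to $0$ as $\|y\|\to\infty$ along the slice $x+\xi_i^\perp$, by Lemma \ref{con-for-u}. The crucial observation is that the maximizers $\{y_n\}$ stay in a fixed compact subset of $x+\xi_i^\perp$: each $y_n$ lies in the sublevel set $\{y\in x+\xi_i^\perp : f_n(y) \ge f_n(x)\}$, and since $f_n(x)\to f_0(x)>0$, a coercivity bound uniform in $n$ of the type in Lemma \ref{con-for-u} traps all these sublevel sets in one ball. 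Passing to a subsequence, $y_n \to y^* \in x+\xi_i^\perp$; since the $u_n$ are convex and converge pointwise to $u_0$, they converge uniformly on compact sets, so $f_n(y_n) \to f_0(y^*) \le F_0(x)$, whence $\limsup_n F_n(x) \le F_0(x)$. Running the same argument along an arbitrary subsequence shows every subsequence of $(F_n(x))$ has a further subsequence tending to $F_0(x)$, which promotes the conclusion to convergence of the whole sequence. In the monotone situation actually used in the proof of Theorem \ref{thm-mix-quer} (where $f_n = f_{t_n}$ with $t_n \downarrow 0$, so $f_{t_n}$ is monotone by Lemma \ref{mon-ut-ft}), Proposition \ref{kep-ord} makes $F_n$ monotone as well, and the local-uniform-convergence step can be replaced by a short Dini-type argument on a compact slice.

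The hard part will be exactly this upper estimate: pointwise convergence alone does not suffice there, because $\inf$ and $\sup$ need not commute, so one has to (i) rule out that the maximizers $y_n$ escape to infinity and (ii) know that $f_n \to f_0$ is sufficiently regular — locally uniform, or monotone — to interchange the limit with the $\max$. Both ingredients come from the structure of the class $\mathcal{A}$: the uniform coercivity of functions in $\mathcal L$ afforded by Lemma \ref{con-for-u}, and the automatic local uniformity of pointwise limits of convex functions.
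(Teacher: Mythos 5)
Your proposal takes a genuinely different, and considerably heavier, route than the paper's. The paper reads the hypothesis $\lim_{n\to\infty}f_n=f_0$ as \emph{uniform} convergence: for every $\epsilon>0$ there is $N_0$ with $f_0-\epsilon\le f_n\le f_0+\epsilon$ for all $n>N_0$; since the projection is monotone (Proposition \ref{kep-ord}) and commutes with adding constants, $\sup_y\big(f(y)\pm\epsilon\big)=\sup_y f(y)\pm\epsilon$, this immediately gives $f_0|_{\xi_i}-\epsilon\le f_n|_{\xi_i}\le f_0|_{\xi_i}+\epsilon$ and the proof is over (the subsequence extraction in the paper is actually redundant at that point). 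You instead read the hypothesis as pointwise convergence, under which the conclusion is false for arbitrary functions --- a supremum over a slice does not commute with pointwise limits --- and you correctly compensate by invoking the log-concave structure: existence of maximizers, confinement of the maximizers to a fixed compact subset of $x+\xi_i^\perp$, and local uniform convergence of pointwise-convergent convex functions. Your lower estimate is fine. The one real soft spot is your step (i): Lemma \ref{con-for-u} produces constants $a,b$ for a \emph{single} $u\in\mathcal L$, so a ``coercivity bound uniform in $n$'' is not something you can cite from it; you would have to derive it separately, e.g.\ by bounding $u_n$ from below on a sphere $\|y-x\|=R$ in the slice via the local uniform convergence $u_n\to u_0$ and then extending linearly outward by convexity, and you also need $f_0(x)>0$ for the sublevel-set trap to bite. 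What your approach buys is a proof under the weaker pointwise hypothesis, valid within the class $\mathcal A$; what the paper's approach buys is a two-line argument, at the price of silently strengthening the hypothesis to uniform convergence. For the application in Theorem \ref{thm-mix-quer} the monotone setting of Lemma \ref{mon-ut-ft} is available, as you note, so either reading suffices there.
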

\begin{proof}
 Since $\lim\limits_{n\rightarrow \infty}f_n=f_0$, it means that, for $\forall \epsilon >0$, there exist $N_0$, $\forall n>N_0$, such that $f_0-\epsilon \leq f_n\leq f_0+\epsilon$. By the monotonicity of the projection, we have $f_0|_{\xi_i}-\epsilon \leq f_n|_{\xi_i}\leq f_0|_{\xi_i}+\epsilon$. Hence each $\{f_n|_{\xi_i}\}$ has a convergent subsequence, we  denote it also by $\{f_n|_{\xi_i}\}$, converging to some $f'_0|_{\xi_i}$. Then for $x\in \xi_i$, we have
\begin{align*}
  f_0|_{\xi_i}(x)-\epsilon \leq f'_0|_{\xi_i}(x)=\lim_{n\rightarrow \infty}(f_n|_{\xi_i})(x)&\leq f_0|_{\xi_i}(x)+\epsilon.
\end{align*}
By the arbitrarily of $\epsilon$ we have $f'_0|_{\xi_i}=f_0|_{\xi_i}$, so we complete the proof.
\end{proof}
Combining with Proposition \ref{lem-lim-uvp} and  Proposition \ref{lem-lim-uv}, it is easy to obtain the following Propsosition.
\begin{prop}
 Let $u$ and $v$ belong both to the same class $\mathcal L$, $\Omega\in \mathbb R^n$ be the domain of $u$, for any $t>0$, set $u_t=u\Box (v t)$. Assume that $v(0)=0$ and
 $\xi_i\in\mathcal G_{i,n}$, then
\begin{enumerate}
  \item $\forall x\in  \Omega|_{\xi_i}$, $\lim\limits_{t\rightarrow 0^+}u_t|_{\xi_i}(x)=u|_{\xi_i}(x)$,
  \item $\forall x\in int(\Omega|_{\xi_i}),\,\, \lim\limits_{t\rightarrow 0^+}\nabla u_t|_{\xi_i}=\nabla u|_{\xi_i}$.
\end{enumerate}
\end{prop}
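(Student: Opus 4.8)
The plan is to deduce both limits from Lemma~\ref{lem-lim-uv} and Proposition~\ref{lem-lim-uvp}, exploiting that the fibre projection $u\mapsto u|_{\xi_i}$ — which by (\ref{pro-u}) is $u|_{\xi_i}(x)=\min\{u(y):y\in x+\xi_i^\perp\}$, equivalently $f|_{\xi_i}=e^{-u|_{\xi_i}}$ for $f=e^{-u}$ — behaves well under monotone and uniform limits. Two preliminary remarks: since $\|y\|\ge\|x\|$ whenever $x\in\xi_i$ and $y\in x+\xi_i^\perp$, the coercivity bound of Lemma~\ref{con-for-u} descends to the projection, so $u|_{\xi_i}$ and each $u_t|_{\xi_i}$ is a finite, coercive, convex function on its domain; and since $u_t\le u$ on $\mathbb R^n$ for $t\in(0,1]$ by Lemma~\ref{mon-ut-ft}, we have $\Omega\subseteq\Omega_t$ and hence $\Omega|_{\xi_i}\subseteq\Omega_t|_{\xi_i}$, so in particular $u_t|_{\xi_i}\le u|_{\xi_i}<+\infty$ there. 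Now for part (1): by Lemma~\ref{lem-lim-uv}(1), $u_t(y)\to u(y)$ for every $y\in\Omega$, equivalently $f_t(y)\to f(y)$; fixing an arbitrary sequence $t_n\downarrow0$ and applying Proposition~\ref{lem-lim-uvp} to $\{f_{t_n}\}$ gives $f_{t_n}|_{\xi_i}\to f|_{\xi_i}$ pointwise on $\Omega|_{\xi_i}$, and taking $-\log$ (the values being positive there) yields $u_{t_n}|_{\xi_i}(x)\to u|_{\xi_i}(x)$; since the sequence was arbitrary, part (1) follows. If one prefers not to rely on Proposition~\ref{lem-lim-uvp}, note that $u_t|_{\xi_i}(x)$ decreases in $t$ to some $\ell(x)\le u|_{\xi_i}(x)$, and the reverse inequality follows by choosing fibre-minimisers $w_t\in\xi_i^\perp$ of $u_t$, which stay in a fixed compact set by the coercivity bound for $u_1$, and using the lower semicontinuity of each $u_s$ together with Lemma~\ref{lem-lim-uv}(1).

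For part (2): part (1) gives pointwise convergence $u_t|_{\xi_i}\to u|_{\xi_i}$ of finite convex functions on the open convex set $int(\Omega|_{\xi_i})$ in $\xi_i$. By the standard stability properties of convex functions (\cite{roc-con1970}), pointwise convergence on an open convex set is automatically locally uniform, and the gradients converge at every point at which the limit is differentiable. So it suffices to know that $u|_{\xi_i}$ is differentiable on $int(\Omega|_{\xi_i})$: since $u$ is of class $\mathcal C^2_+$, for $x\in int(\Omega|_{\xi_i})$ the minimum defining $u|_{\xi_i}(x)$ is attained at a unique interior point $y_x=x+w(x)$ at which the $\xi_i^\perp$-component of $\nabla u(y_x)$ vanishes, $w(\cdot)$ is $\mathcal C^1$ by the implicit function theorem (the restriction of the Hessian of $u$ to $\xi_i^\perp$ being invertible), and differentiating $x\mapsto u(x+w(x))$ together with this first-order condition gives the envelope identity $\nabla(u|_{\xi_i})(x)=\nabla u(y_x)$. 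Hence $\nabla(u_t|_{\xi_i})(x)\to\nabla(u|_{\xi_i})(x)$ for all $x\in int(\Omega|_{\xi_i})$, locally uniformly. Equivalently, in the spirit of combining with Lemma~\ref{lem-lim-uv}(2): write $\nabla(u_t|_{\xi_i})(x)=\nabla u_t(y_x^t)$ with $y_x^t$ the fibre-minimiser of $u_t$, show $y_x^t\to y_x$ using the coercivity that is uniform in $t\in[0,1]$ by Lemmas~\ref{con-for-u} and~\ref{mon-ut-ft} together with $u_t\to u$, and conclude by the uniform convergence of $\nabla u_t$ near $y_x$ from Lemma~\ref{lem-lim-uv}(2).

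The only genuinely delicate point is part (2): one must make sure that the projected functions stay differentiable and that the fibre-minimisation commutes with the limit $t\to0^+$. This is precisely what the convex-analytic convergence of gradients (at differentiability points of the limit) provides, or equivalently the convergence $y_x^t\to y_x$ of fibre-minimisers; the convexity and coercivity of the projections, and the passage to $-\log$ in part (1), are routine.
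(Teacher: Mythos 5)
Your argument is correct, and for part (1) it follows the same outline the paper intends: the paper offers no actual proof of this proposition, stating only that it follows ``by combining'' Proposition \ref{lem-lim-uvp} with Lemma \ref{lem-lim-uv}, which is exactly your primary route. The useful difference is that you supply the content that this one-line assertion glosses over. First, Proposition \ref{lem-lim-uvp} as proved in the paper really uses a \emph{uniform} bound $f_0-\eps\le f_n\le f_0+\eps$, whereas Lemma \ref{lem-lim-uv}(1) only gives pointwise convergence of $u_t$ on $\Omega$; your fallback argument for (1) --- monotonicity of $u_t|_{\xi_i}$ in $t$ from Lemma \ref{mon-ut-ft}, plus compactness of the fibre-minimisers via the coercivity bound of Lemma \ref{con-for-u} (which, as you note, descends to the projection because $\|y\|\ge\|x\|$ on the fibre $x+\xi_i^\perp$) --- is therefore the argument that actually closes the gap, and it is absent from the paper. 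Second, for part (2) the paper's cited ingredients do not by themselves yield convergence of $\nabla(u_t|_{\xi_i})$: one needs either the convex-analytic fact that pointwise convergence of finite convex functions on an open set forces locally uniform convergence of gradients at differentiability points of the limit (Rockafellar), together with the envelope identity $\nabla(u|_{\xi_i})(x)=\nabla u(y_x)$ establishing that differentiability, or else the convergence $y_x^t\to y_x$ of fibre-minimisers combined with Lemma \ref{lem-lim-uv}(2). You give both, and either suffices. The only caveat worth recording is one you inherit from the paper rather than create: the proposition is stated for $u,v\in\mathcal L$, but part (2) and your envelope/implicit-function argument tacitly require the $\mathcal C^2_+$ regularity of the class $\mathcal L'$ used elsewhere in the paper, so the smoothness hypothesis should be made explicit.
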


Now let us introduce some fact about the functions $u_t=u\Box (vt)$ with respect to the parameter $t$.
\begin{lem} \label{lem-duvp}
Let $\xi_i\in \mathcal G_{i,n}$, $u$ and $v$ belong both to the same class $\mathcal L$, $u_t:=u\Box (vt)$ and $\Omega_t$ be the domain of $u_t$ ($t>0$). Then for $ x\in{\Omega_t}|_{\xi_i}$,
\begin{align}
\frac{d}{dt}\Big (u_t|_{\xi_i}\Big)(x)=-\psi\Big(\nabla\big( u_t|_{\xi_i}\big)(x)\Big),
\end{align}
where $\psi:=v^*|_{\xi_i}$.
\end{lem}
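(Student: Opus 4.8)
The plan is to reduce the statement to the last Lemma of Section~2 — the identity $\frac{d}{dt}u_t(x)=-\psi(\nabla u_t(x))$ with $\psi=v^{*}$ on the whole space — by showing that projection onto $\xi_i$ commutes with the construction $u\mapsto u\Box(vt)$. Taking $\alpha=1$, $\beta=t$ in the definition of $\oplus$ (see (\ref{f-plu-g})) we have $f_t:=e^{-u_t}=e^{-u\Box(vt)}=f\oplus t\cdot g$, where $f=e^{-u}$ and $g=e^{-v}$. Proposition~\ref{f-add-mul} then gives $f_t|_{\xi_i}=f|_{\xi_i}\oplus t\cdot g|_{\xi_i}$; passing to $-\log$ via (\ref{pro-u}) (so that $f|_{\xi_i}=e^{-u|_{\xi_i}}$ and $g|_{\xi_i}=e^{-v|_{\xi_i}}$) and using the definition of $\oplus$ once more inside $\xi_i$, we obtain
\[
   u_t|_{\xi_i}\;=\;(u|_{\xi_i})\,\Box\,\big((v|_{\xi_i})\,t\big).
\]
Thus the projected one-parameter family is again of exactly the same type, now built inside $\xi_i\cong\mathbb R^{i}$ from the ``base'' $u|_{\xi_i}$ and the ``increment'' $v|_{\xi_i}$.

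Next I would check that $u|_{\xi_i}$ and $v|_{\xi_i}$ belong to the class $\mathcal L$ relative to $\xi_i$: convexity is inherited since a fiberwise infimum (equivalently, a marginal) of a convex function is convex; lower semicontinuity and the coercivity $\lim_{\|x\|\to+\infty}u|_{\xi_i}(x)=+\infty$ follow from Lemma~\ref{con-for-u}, because for $x\in\xi_i$ and $z\in\xi_i^{\perp}$ one has $\|x+z\|\ge\|x\|$, whence $u|_{\xi_i}(x)\ge a\|x\|+b$; and the $\mathcal C^2_+$–type regularity invoked in the proof of the Section~2 lemma (in particular, existence and uniqueness of the point of $x+\xi_i^{\perp}$ realizing the projection, which underlies the very definition of $u_t|_{\xi_i}$ and its differentiability in $t$) is also preserved under projection. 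Granting this, the last Lemma of Section~2 applies verbatim with $u|_{\xi_i}$ in the role of $u$ and $v|_{\xi_i}$ in the role of $v$: for $x$ in the interior of $\Omega_t|_{\xi_i}$ (the projection of $\Omega_t=\mathrm{dom}(u)+t\,\mathrm{dom}(v)$, by part~(3) of the first Proposition of Section~2),
\[
   \frac{d}{dt}\big(u_t|_{\xi_i}\big)(x)\;=\;-\,\big(v|_{\xi_i}\big)^{*}\Big(\nabla\big(u_t|_{\xi_i}\big)(x)\Big),
\]
the gradient being computed inside $\xi_i$.

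Finally I would identify $\big(v|_{\xi_i}\big)^{*}$ with $\psi=v^{*}|_{\xi_i}$. Rewriting (\ref{pro-u}) for $v$ gives $v|_{\xi_i}(x)=\inf_{z\in\xi_i^{\perp}}v(x+z)$, so for $y\in\xi_i$, using $\langle z,y\rangle=0$ for every $z\in\xi_i^{\perp}$,
\[
   \big(v|_{\xi_i}\big)^{*}(y)=\sup_{x\in\xi_i}\sup_{z\in\xi_i^{\perp}}\big\{\langle x+z,y\rangle-v(x+z)\big\}=\sup_{w\in\mathbb R^n}\big\{\langle w,y\rangle-v(w)\big\}=v^{*}(y).
\]
Hence $\big(v|_{\xi_i}\big)^{*}$ is precisely $v^{*}$ read on $\xi_i$, i.e. $\psi$, and substituting into the displayed formula completes the proof.

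I expect the real content to sit in the second step: the assertion that $\frac{d}{dt}$ may be interchanged with the fiberwise maximum defining the projection is exactly the statement that $u|_{\xi_i}$ and $v|_{\xi_i}$ inherit enough regularity for the Section~2 lemma to be invoked (smoothness and strict convexity of the marginal, finiteness of $v^{*}$ on $\xi_i$). Once that is in place, the algebraic identity $u_t|_{\xi_i}=(u|_{\xi_i})\Box\big((v|_{\xi_i})t\big)$ and the projection/restriction duality $\big(v|_{\xi_i}\big)^{*}=v^{*}|_{\xi_i}$ are routine.
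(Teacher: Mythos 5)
Your proposal is correct and follows essentially the same route as the paper: both arguments rest on the two identities $(u\Box(vt))|_{\xi_i}=(u|_{\xi_i})\Box((v|_{\xi_i})t)$ and $(v|_{\xi_i})^{*}=v^{*}$ on $\xi_i$, and then run the one-parameter Legendre-transform differentiation in the subspace. The only difference is presentational — the paper re-derives the differentiation step explicitly inside $\xi_i$ (via $\varphi_t=\varphi+t\psi$, Proposition~\ref{leg-con-par} and the identity (\ref{leg-equ})), whereas you invoke the last Lemma of Section~2 as a black box after checking its hypotheses for the projected functions; and you actually prove the conjugate--projection duality that the paper only asserts as ``easy to see''.
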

\begin{proof}

Set $D_t:={\Omega_t}|_{\xi_i}\subset \xi_i$, for fixed $x\in int(D_t)$, the map $t\rightarrow \nabla \big(u_t|_{\xi_i}\big)(x)$ is differentiable on $(0,+\infty)$. Indeed, by the definition of Fenchel conjugate and the definition of projection $u$, it is easy to see that $(u|_{\xi_i})^*=u^*|_{\xi_i}$ and $(u\Box ut)|_{\xi_i}=u|_{\xi_i}\Box ut|_{\xi_i}$ hold. The Lemma \ref{clo-uv} and the property of the projection grant the differentiability. Set  $\varphi:=u^*|_{\xi_i}$ and $\psi:=v^*|_{\xi_i}$, and $\varphi_t=\varphi+t\psi$, then $\varphi_t$  belongs to the class $\mathcal C^2_+$ on $\xi_i$. Then
$\nabla^2 \varphi_t=\nabla^2\varphi+t\nabla^2 \psi$ is nonsingular on $\xi_i$. So the equation
\begin{align}\label{nab-fun-uv}
\nabla \varphi(y)+t\nabla \psi(y)-x=0,
\end{align}
 locally defines a map $y=y(x,t)$ which is of class $\mathcal C^1$. By Proposition \ref{leg-con-par}, we have $\nabla (u_t|_{\xi_i})$ is the inverse map of $\nabla \varphi_t$, that is $\nabla \varphi_t(\nabla (u_t|_{\xi_i}(x))=x$, which means that for every $x\in int(D_t)$  and every $t>0$, $t\rightarrow \nabla (u_t|_{\xi_i})$ is differentiable.
Using the equation (\ref{leg-equ}) again, we have
\begin{align}
  u_t|_{\xi_i}(x)=\big\langle x, \nabla(u_t|_{\xi_i})(x)\big\rangle-\varphi_t\big(\nabla(u_t|_{\xi_i})(x)\big),\,\,\,\,\,\,\,\,\,\,\,\forall x\in int(D_t).
\end{align}
Moreover, note that $\varphi_t=\varphi+t\psi$ we have
\begin{align*}
  u_t|_{\xi_i}(x)&=\Big\langle x, \nabla(u_t|_{\xi_i})(x)\Big\rangle-\varphi\Big(\nabla \big(u_t|_{\xi_i}\big)(x)\Big)-t\psi\Big(\nabla\big( u_t|_{\xi_i}\big)(x)\Big)\\
&=u_t|_{\xi_i}\Big(\nabla\big( u_t|_{\xi_i}\big)(x)\Big)-t\psi\Big(\nabla\big( u_t|_{\xi_i}\big)(x)\Big).
\end{align*}
Differential the above formal we obtain, $\frac{d}{dt}\Big(u_t|_{\xi_i}\Big)(x)=-\psi\Big(\nabla\big( u_t|_{\xi_i}\big)(x)\Big).$
Then we complete the proof of the result.\end{proof}


\section{Functional Quermassintegrals of Log-concave Function}

A function $f\in \mathcal A$ is non-degenerate and integrable if and only if
$\lim\limits_{\|x\|\rightarrow +\infty}\frac{u(x)}{\|x\|}=+\infty.$
Let $\mathcal L'=\big\{u\in\mathcal L:  u\in \mathcal C^2_+(\mathbb R^n),\lim\limits_{\| x\|\rightarrow +\infty}\frac{u(x)}{\|x\|}=+\infty\big\},$
and $\mathcal A'=\big\{f:\mathbb R^n\rightarrow (0,+\infty]:\,\,f=e^{-u}, u\in \mathcal L'\big\}.$ Now we define the $i$-th total mass of $f$.

\begin{defn}
Let $f\in \mathcal A'$, $\xi_{i}\in \mathcal G_{i,n}$ $(i=1,\,2,\cdots,n-1)$, and $x\in \Omega|_{\xi_{i}}$. The $i$-th total mass of $f$ is defined as
\begin{align}\label{i-tot-mass}
  J_{i}(f):=\int_{\xi_{i}}f|_{\xi_{i}}({x})dx,
\end{align}
where $f|_{\xi_i}$ is the projection of $f$ onto $\xi_i$ defined by (\ref{pro-f}), $dx$ is the $i$-dimensional volume element in $\xi_i$.
\end{defn}
\begin{rem}
(1) The definition of the $J_i(f)$ follows the $i$-dimensional volume of the projection a convex body.
If $i=0$, we defined  $J_0(f):=\omega_n$, the volume of the unit ball in $\mathbb R^n$, for the completeness.

(2) When take $f=\chi_K$, the characteristic function of a convex body $K$, one has $J_i(f)=V_i(K)$, the $i$-dimensional volume in $\xi_i$.
\end{rem}
\begin{defn}
Let $f\in\mathcal A'$. Set $\xi_i\in\mathcal G_{i,n}$ be a linear subspace and, for $x\in \Omega|_{\xi_i}$, the $i$-th functional Quermassintegrals of $f$ (or the $i$-dimensional mean projection mass of $f$) are defined as
\begin{align}\label{quer-def}
  W_{n-i}(f):=\frac{\omega_n}{\omega_i}\int_{\mathcal G_{i,n}}J_{i}(f)d\mu(\xi_{i}),
\end{align}
where $J_i(f)$ is the $i$-th total mass of $f$ defined by (\ref{i-tot-mass}), $d\mu(\xi_i)$ is the normalized Haar measure on $\mathcal G_{i,n}$.
\end{defn}
\begin{rem}

(1) The definition of the $W_i(f)$ follows the definition of the $i$-th Quermassintegral $W_i(K)$, that is, the $i$-th mean total mass of $f$ on $\mathcal G_{i,n}$.  Also in the recently paper  \cite{bob-col-fra-que2014}, the authors give the same definition by defining the Quermassintegral of the support set for the quasi-concave functions.

(2) When $i$ equals to $n$ in (\ref{quer-def}), we have $W_0(f)=\int_{\mathbb R^n}f(x)dx=J(f),$
 the total mass function of $f$ defined by Colesanti and Fragal\'a \cite{col-fra-the2013}. Then we can say that our definition of the $W_i(f)$ is a nature extension of the total mass function of $J(f)$.

(3) Form the definition of the Quermassintegrals $W_i(f)$, the following properties are obtained (see also \cite{bob-col-fra-que2014}).
\begin{itemize}
  \item Positivity. $0\leq W_i(f)\leq +\infty.$
  \item Monotonicity. $W_i(f)\leq W_i(g)$, if $f\leq g$.
  \item Generally speaking, the $W_i(f)$ has no homogeneity under dilations. That is $W_i(\lambda \cdot f)=\lambda^{n-i}W_i(f^\lambda),$ where $\lambda \cdot f(x)=\lambda f(x/\lambda), \lambda>0$.
\end{itemize}
 \end{rem}




\begin{defn}
Let $f$, $g\in \mathcal A'$, $\oplus$ and $\cdot$ denote the operations of ``sum'' and ``multiplication'' in $\mathcal A'$. $W_i(f)$ and $W_i(g)$  are, respectly, the $i$-th Quermassintegrals of $f$ and $g$. Whenever the following limit exists
\begin{align}
W_i(f,g)=\frac{1}{(n-i)}\lim_{t\rightarrow 0^+}\frac{W_i(f\oplus t\cdot g)-W_i(f)}{t},
\end{align}
we denote it by $W_i(f,g)$, and call it as the first variation of $W_i$ at $f$ along $g$, or the $i$-th functional mixed Quermassintegrals of $f$ and $g$.
\end{defn}

\begin{rem}
Let $f=\chi_K$ and $g=\chi_L$, with $K$, $L\in \mathcal K^n$. In this case $W_i(f\oplus t\cdot g)=W_i(K+t L)$, then $W_i(f,g)=W_i(K, L)$.
In general, $W_i(f,g)$ has no analog properties of $W_i(K,L)$, for example, $W_i(f,g)$ is not always nonnegative and finite.
\end{rem}

The following is devote to prove that $W_i(f,g)$ exist under the fairly weak hypothesis. First, we prove that the first $i$-dimensional total mass of $f$ is translation invariant.

\begin{lem}\label{red-v-0}
 Let $\xi_i\in \mathcal G_{i,n}$, $f=e^{-u},\,\,g=e^{-v}\in\mathcal A'$. Let $c=\inf u|_{\xi_i}=:u(0)$, $d=\inf v|_{\xi_i}:=v(0)$, and set $\widetilde{u}_i(x)=u|_{\xi_i}(x)-c$, $\widetilde v_i(x)=v|_{\xi_i}(x)-d$, $\widetilde\varphi_i(y)= (\widetilde u_i)^*(y)$, $\widetilde \psi_i(y)=(\widetilde v_i)^*(y)$, $\widetilde f_i=e^{-\widetilde u_i}$, $\widetilde g_i=e^{-\widetilde v_i}$,  and $\widetilde f_t|_i=\widetilde f\oplus t\cdot \widetilde g$.
Then if
$\lim\limits_{t\rightarrow 0^+}\frac{J_i(\widetilde f_t)-J_i(\widetilde f)}{t}=\int_{\xi_i}\widetilde \psi_i d\mu_i(\widetilde f)$
holds, then we have $\lim\limits_{t\rightarrow 0^+}\frac{J_i(f_t)-J_i(f)}{t}=\int_{\xi_i} \psi_i d\mu_i(f).$
\end{lem}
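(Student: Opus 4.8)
The plan is to observe that replacing the projected exponents $u|_{\xi_i},v|_{\xi_i}$ by their normalizations $\widetilde u_i=u|_{\xi_i}-c$, $\widetilde v_i=v|_{\xi_i}-d$ merely rescales every quantity in the statement by an explicit exponential constant, so that the derivative identity for $\widetilde f$ and the one for $f$ become equivalent. Note first that $c$ and $d$ are finite, since $\inf u|_{\xi_i}=\inf u>-\infty$ and $\inf v|_{\xi_i}=\inf v>-\infty$ by Lemma~\ref{con-for-u}.

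First I would record the bookkeeping. Writing $u_t:=u\Box(vt)$, projection commutes with $\Box$ and with the scalar action (the exponent-level form of Proposition~\ref{f-add-mul}, already used in the proof of Lemma~\ref{lem-duvp}), so $f_t|_{\xi_i}=e^{-u_t|_{\xi_i}}$ with $u_t|_{\xi_i}=u|_{\xi_i}\Box\bigl(v|_{\xi_i}\,t\bigr)$. A direct computation from (\ref{inf-con}) and (\ref{rig-mul}) gives, for $t>0$ and real constants $c,d$,
\[
(h-c)\Box\bigl((k-d)t\bigr)=\bigl(h\Box(kt)\bigr)-c-td,
\]
hence $\widetilde u_i\Box(\widetilde v_i\,t)=u_t|_{\xi_i}-c-td$ and therefore
\[
\widetilde f_t|_i=e^{\,c+td}\,f_t|_{\xi_i},\qquad\text{so in particular}\qquad \widetilde f_i=e^{\,c}\,f|_{\xi_i}.
\]
Integrating over $\xi_i$ yields $J_i(\widetilde f_t)=e^{\,c+td}\,J_i(f_t)$ and $J_i(\widetilde f)=e^{\,c}\,J_i(f)$. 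On the conjugate side $(h-c)^*=h^*+c$, so $\widetilde\psi_i=\psi_i+d$; and since $\nabla\widetilde u_i=\nabla\bigl(u|_{\xi_i}\bigr)$ while the underlying density is multiplied by $e^{c}$, the definition of the $i$-dimensional measure gives $\mu_i(\widetilde f)=e^{\,c}\,\mu_i(f)$, whence also $\mu_i(f)(\xi_i)=J_i(f)$.

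Next I would combine these with a careful limit. From the displayed relations,
\[
\frac{J_i(\widetilde f_t)-J_i(\widetilde f)}{t}=e^{\,c}\Bigl(e^{\,td}\,\frac{J_i(f_t)-J_i(f)}{t}+\frac{e^{\,td}-1}{t}\,J_i(f)\Bigr).
\]
As $t\to 0^+$ we have $e^{td}\to1$ and $\tfrac{e^{td}-1}{t}\,J_i(f)\to d\,J_i(f)$, the latter finite because $f\in\mathcal A'$ makes $J_i(f)<\infty$; so the hypothesis that the left-hand side converges forces $\dfrac{J_i(f_t)-J_i(f)}{t}$ to converge too, with
\[
\lim_{t\to 0^+}\frac{J_i(f_t)-J_i(f)}{t}=e^{-c}\int_{\xi_i}\widetilde\psi_i\,d\mu_i(\widetilde f)-d\,J_i(f).
\]
Finally, $\int_{\xi_i}\widetilde\psi_i\,d\mu_i(\widetilde f)=\int_{\xi_i}(\psi_i+d)\,d\bigl(e^{c}\mu_i(f)\bigr)=e^{c}\bigl(\int_{\xi_i}\psi_i\,d\mu_i(f)+d\,J_i(f)\bigr)$, and substituting this into the previous line the two $d\,J_i(f)$ terms cancel, leaving $\lim_{t\to0^+}\frac{J_i(f_t)-J_i(f)}{t}=\int_{\xi_i}\psi_i\,d\mu_i(f)$, which is the assertion. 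The algebraic identities above are routine; the one genuinely delicate point is the order of operations in the limit — the assumed identity concerns $\widetilde f$ only, so one must isolate the explicitly convergent, finite correction term $\tfrac{e^{td}-1}{t}J_i(f)$ \emph{before} passing to the limit, in order to extract both the existence and the value of $\lim_{t\to0^+}\frac{J_i(f_t)-J_i(f)}{t}$.
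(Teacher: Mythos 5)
Your proposal is correct and follows essentially the same route as the paper: both arguments rest on the identities $\widetilde f_t|_i=e^{c+td}f_t|_{\xi_i}$, $\widetilde\psi_i=\psi_i+d$, and $\mu_i(\widetilde f)=e^c\mu_i(f)$, and then transfer the limit by separating off the finite correction term $d\,J_i(f)$, which cancels in the end. Your rearrangement of the difference quotient before passing to the limit is just a slightly more explicit version of the paper's product-rule differentiation of $J_i(f_t)=e^{-(c+dt)}J_i(\widetilde f_t)$.
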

\begin{proof}
  By the construction, we have $\widetilde u_i(0)=0,\,\, \widetilde v_i(0)=0,$ and $\widetilde v_i\geq 0,\,\,\widetilde \varphi_i\geq 0,\,\,\widetilde \psi_i\geq 0$. Further, $\widetilde \psi_i(y)=\psi_i(y)+d$, and $\widetilde f_i=e^cf_i$. So
\begin{align}
\lim\limits_{t\rightarrow 0^+}\frac{J_i(\widetilde f_t)-J_i(\widetilde f)}{t}=\int_{\xi_i}\widetilde \psi_i d\mu_i(\widetilde f)=e^c\int_{\xi_i} \psi_i d\mu_i(f)+d e^c\int_{\xi_i} d\mu_i( f).
\end{align}
On the other hand, since $f_i\oplus t\cdot g_i=e^{-(c+dt)}(\widetilde f_i\oplus t\cdot\widetilde g_i),$
we have, $J_i(f\oplus t\cdot g)=e^{-(c+dt)}J_i(\widetilde f_i\oplus t\cdot\widetilde g_i).$
Derivation both sides of the above formula, we obtain
\begin{align*}
\lim_{t\rightarrow 0^+}\frac{J_i(f\oplus t\cdot g)-J_i(f)}{t}&=-d e^{-c}\lim_{t\rightarrow 0^+}J_i(\widetilde f_i\oplus t\widetilde g_i)dx+e^{-c}\lim_{t\rightarrow 0^+}\Big[\frac{J_i(\widetilde f_t)-J_i(\widetilde f)}{t}\Big]\\
&=-d e^{-c}J_i(\widetilde f_i)+\int_{\xi_i} \psi_i d\mu_i(f)+d \int_{\xi_i} d\mu_i( f)\\
&=\int_{\xi_i}\psi_i d\mu_i(f).
\end{align*}
So we complete the proof.
\end{proof}

\begin{thm}
  Let $f,\,\,g\in \mathcal A'$, and satisfy $-\infty \leq \inf(\log g)\leq +\infty$ and $W_i(f)>0$. Then $W_j(f,g)$ is defferentiable at $f$ along $g$, and it holds
\begin{align}
W_j(f,g)\in [-k,+\infty],
\end{align}
where $k=\max\{d,0\}W_i(f)$.
\end{thm}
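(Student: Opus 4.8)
Write $f_t:=f\oplus t\cdot g$, so that $W_i(f_t)=\frac{\omega_n}{\omega_i}\int_{\mathcal G_{i,n}}J_i(f_t)\,d\mu(\xi_i)$ (by (\ref{quer-def})) and $W_i(f,g)=\frac1{n-i}\lim_{t\to0^{+}}\frac{W_i(f_t)-W_i(f)}{t}$. The plan has three parts: first reduce to the normalized case $\inf v=0$, where the difference quotients are nonnegative; then, in that case, prove that $\lim_{t\to0^{+}}\frac{J_i(f_t)-J_i(f)}{t}$ exists in $[0,+\infty]$ on each subspace $\xi_i$ by the Colesanti--Fragal\`{a} first-variation computation carried out on $\xi_i$ via Lemma~\ref{lem-duvp}; finally interchange this limit with the Grassmannian integral and undo the normalization. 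For the reduction, put $d:=\inf v$; note $d=\inf_x v|_{\xi_i}$ for \emph{every} $\xi_i$, since $v|_{\xi_i}(x)=\min\{v(y):y\in x+\xi_i^{\perp}\}$ gives $\inf_x v|_{\xi_i}=\inf v$. Setting $\widetilde g:=e^{-(v-d)}$ and using $v^{*}=(v-d)^{*}-d$ one checks, as in Lemma~\ref{red-v-0} (and after a harmless translation of $g$, which leaves $W_i(f,\cdot)$ unchanged, so that also $v(0)=0$), that $f_t=e^{-td}\,(f\oplus t\cdot\widetilde g)$, hence $W_i(f_t)=e^{-td}\,W_i(f\oplus t\cdot\widetilde g)$, and expanding the difference quotient and letting $t\to0^{+}$ yields
\begin{align*}
W_i(f,g)=\widetilde W_i(f,\widetilde g)-\frac{d}{n-i}\,W_i(f),
\end{align*}
where $\widetilde W_i(f,\widetilde g)$ is the first variation relative to $\widetilde g$. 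Thus $W_i(f,g)$ exists as soon as $\widetilde W_i(f,\widetilde g)$ does, and if $\widetilde W_i(f,\widetilde g)\in[0,+\infty]$ then, distinguishing the sign of $d$ and using $n-i\ge1$ and $W_i(f)>0$, one gets $W_i(f,g)\ge-\frac{d}{n-i}W_i(f)\ge-\max\{d,0\}W_i(f)=-k$. Since now $v-d\ge0$ and $(v-d)^{*}\ge0$, it suffices to treat $\widetilde g$; i.e.\ we may assume $\inf v=0$.

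Assume therefore $\inf v=0$. By Lemma~\ref{mon-ut-ft}, $f\le f_t\le f_1$ on $[0,1]$, so Proposition~\ref{kep-ord} gives $f|_{\xi_i}\le f_t|_{\xi_i}\le f_1|_{\xi_i}$, whence $0\le J_i(f_t)-J_i(f)\le J_i(f_1)<\infty$; in particular the difference quotients are nonnegative. Because projection intertwines infimal convolution and Legendre conjugation, $u_t|_{\xi_i}=u|_{\xi_i}\Box\big((v|_{\xi_i})t\big)$, so on $\xi_i$ we are exactly in the Colesanti--Fragal\`{a} situation applied to $f|_{\xi_i},g|_{\xi_i}$. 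Concretely, Lemma~\ref{lem-duvp} gives $\frac{d}{dt}\big(u_t|_{\xi_i}\big)(x)=-\psi\big(\nabla(u_t|_{\xi_i})(x)\big)$ with $\psi:=v^{*}|_{\xi_i}\ge0$, hence $\frac{d}{dt}\big(f_t|_{\xi_i}\big)(x)=\psi\big(\nabla(u_t|_{\xi_i})(x)\big)\,f_t|_{\xi_i}(x)\ge0$, and, all integrands being nonnegative and bounded on $[0,1]$ by the integrable $f_1|_{\xi_i}$, Tonelli's theorem gives
\begin{align*}
J_i(f_t)-J_i(f)=\int_0^t\!\!\int_{\xi_i}\psi\big(\nabla(u_s|_{\xi_i})(x)\big)\,f_s|_{\xi_i}(x)\,dx\,ds.
\end{align*}
Following the argument of Colesanti--Fragal\`{a} --- using the projected form of Lemma~\ref{lem-lim-uv} ($\nabla(u_s|_{\xi_i})\to\nabla(u|_{\xi_i})$ and $f_s|_{\xi_i}\to f|_{\xi_i}$ as $s\to0^{+}$) and the continuity of $\psi$ from Proposition~\ref{leg-con-par} --- the inner integral has a limit as $s\to0^{+}$, so $\lim_{t\to0^{+}}\frac{J_i(f_t)-J_i(f)}{t}$ exists in $[0,+\infty]$ for each $\xi_i$.

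It remains to integrate over the Grassmannian. Since $\frac{W_i(f_t)-W_i(f)}{t}=\frac{\omega_n}{\omega_i}\cdot\frac1t\int_{\mathcal G_{i,n}}\!\int_0^t\!\int_{\xi_i}\psi\big(\nabla(u_s|_{\xi_i})(x)\big)f_s|_{\xi_i}(x)\,dx\,ds\,d\mu(\xi_i)$ has a nonnegative integrand, Tonelli again lets us pass $t\to0^{+}$ under $\int_{\mathcal G_{i,n}}$, so $W_i(f,g)=\frac1{n-i}\lim_{t\to0^{+}}\frac{W_i(f_t)-W_i(f)}{t}$ exists in $[0,+\infty]$; together with the reduction this proves $W_i(f,g)\in[-k,+\infty]$. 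The step I expect to be the main obstacle is the existence of $\lim_{s\to0^{+}}\int_{\xi_i}\psi(\nabla(u_s|_{\xi_i})(x))\,f_s|_{\xi_i}(x)\,dx$: the density can blow up as $\|x\|\to\infty$ or as $s\to0^{+}$, and no integrable majorant is available a priori, so one cannot simply invoke dominated convergence. This is precisely the technical core of the Colesanti--Fragal\`{a} estimate, controlled by combining the monotonicity of Lemma~\ref{mon-ut-ft}, the locally uniform convergence in Lemma~\ref{lem-lim-uv} (and its projected analogue), and the superlinearity built into the class $\mathcal A'$; the secondary interchange of $\lim_{t\to0^{+}}$ with $\int_{\mathcal G_{i,n}}$ is then painless via Tonelli once the $J_i$-difference is written as the nonnegative iterated integral above.
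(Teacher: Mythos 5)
Your normalization step --- replacing $g$ by $\widetilde g=e^{-(v-d)}$ with $d=\inf v$, pulling out the factor $e^{-td}$, and checking that this costs exactly $-\frac{d}{n-i}W_i(f)\geq -k$ --- is sound and matches the reduction the paper performs (and Lemma \ref{red-v-0}). The problem is the core of your argument: you reduce everything to the existence of $\lim_{s\to 0^+}\int_{\xi_i}\psi\big(\nabla(u_s|_{\xi_i})(x)\big)f_s|_{\xi_i}(x)\,dx$, and then explicitly decline to prove it, calling it ``the main obstacle'' and ``the technical core.'' That is not a deferrable technicality here: it is essentially the integral representation of Theorem \ref{thm-mix-quer}, i.e.\ a strictly stronger statement than the one you are proving, and without it your proposal contains no argument at all for why the limit defining $W_i(f,g)$ exists. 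As you yourself observe, there is no integrable majorant, so neither dominated convergence nor a further application of Tonelli closes the gap; writing $J_i(f_t)-J_i(f)$ as an iterated integral only relocates the difficulty from $t\to 0^+$ to $s\to 0^+$.

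The paper's proof gets existence by a softer mechanism that your route never invokes, and this is the genuine point of divergence. After the same normalization, Lemma \ref{mon-ut-ft} makes $\widetilde f_t|_{\xi_i}$ pointwise nondecreasing in $t$, hence $W_i(\widetilde f_t)$ is nondecreasing, and the paper uses that $t\mapsto \log W_i(\widetilde f_t)$ is an increasing \emph{concave} function of $t$ (the Pr\'{e}kopa--Leindler--type ingredient). An increasing concave function automatically has a right derivative at $0^+$ in $[0,+\infty]$; combining this with $\lim_{t\to0^+}W_i(\widetilde f_t)=W_i(f)>0$ through the identity
\begin{align*}
\lim_{t\to 0^+}\frac{W_i(\widetilde f_t)-W_i(f)}{\log W_i(\widetilde f_t)-\log W_i(f)}=W_i(f)
\end{align*}
yields $\lim_{t\to 0^+}\frac{W_i(\widetilde f_t)-W_i(f)}{t}\in[0,+\infty]$ with no analysis of the measures $\mu_i$ or of $\nabla u_t$ whatsoever (the degenerate case where $W_i(\widetilde f_{t_0})=W_i(f)$ for some $t_0>0$ is handled separately by monotonicity). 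To repair your proposal you must either import the full Colesanti--Fragal\`{a} convergence estimate for the densities --- at which point you have effectively proved the representation theorem, not just existence --- or switch to this monotonicity-plus-concavity argument. As written, the proposal has a real gap precisely where the content of the theorem lies.
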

\begin{proof}
Let $\xi_i\in\mathcal G_{i,n}$, since $u|_{\xi_i}:=-\log (f|_{\xi_i})=-(\log f)|_{\xi_i}$ and $v|_{\xi_i}:=-\log (g|_{\xi_i})=- (\log f)|_{\xi_i}.$
By the definition of $f_t$ and the Proposition \ref{f-add-mul} we obtain, $f_t|_{\xi_i}=(f\oplus t\cdot g)|_{\xi_i}=f|_{\xi_i}\oplus t\cdot g|_{\xi_i}.$
Notice that $v|_{\xi_i}(0)=v(0)$, set
$d:=v(0)$, $\widetilde v|_{\xi_i}(x):=v|_{\xi_i}(x)-d$, $\widetilde g|_{\xi_i}(x):=e^{-\widetilde v|_{\xi_i}(x)}$, and $\widetilde f_t|_{\xi_i}:=f|_{\xi_i}\oplus t\cdot \widetilde g|_{\xi_i}.$
Up to a translation of coordinates, we may assume $\inf(v)=v(0).$  The Lemma \ref{mon-ut-ft} says that for every $x\in \xi_i$,
$$f|_{\xi_i}\leq \widetilde f_t|_{\xi_i}\leq \widetilde f_1|_{\xi_i},\,\,\,\,\,\,\,\,\,\,\,\,\,\,\forall x\in \mathbb R^n,\,\,\forall t\in[0,1].$$
Then there exists $\widetilde f|_{\xi_i}(x):=\lim\limits_{t\rightarrow 0^+}\widetilde f_t|_{\xi_i}(x)$. Moreover, it holds $\widetilde f|_{\xi_i}(x)\geq f|_{\xi_i}(x)$ and $\widetilde f_t|_{\xi_i}$ is pointwise decreasing as $t\rightarrow 0^+$.  Lemma \ref{con-for-u} and Proposition \ref{clo-uv} show that
$f|_{\xi_i}\oplus t\cdot  \widetilde g|_{\xi_i}\in \mathcal A'$,  $\forall t\in [0,1].$
Then $J_i(f)\leq J_i(\widetilde f_t)\leq J_i(\widetilde f_1)$, $-\infty \leq J_i(f), J_i(\widetilde f_1)<\infty$. Hence, by the monotone and  convergence, we have $\lim_{t\rightarrow 0^+}W_i( \widetilde f_t)=W_i(\widetilde f).$
In fact, by definition  we have $\widetilde f_t|_{\xi_i}(x)=e^{-\inf\{u|_{\xi_i}(x-y)+tv|_{\xi_i}(\frac{y}{t})\}}$, and
$$-\inf\{u|_{\xi_i}(x-y)+tv|_{\xi_i}(\frac{y}{t})\}\leq - \inf u|_{\xi_i}(x-y)-t\inf v|_{\xi_i}(\frac{y}{t}),$$
Note that $-\infty \leq \inf(v|_{\xi_i})\leq +\infty$,  then $- \inf u|_{\xi_i}(x-y)-t\inf v|_{\xi_i}(\frac{y}{t})$  is continuous function of variable $t$, then
\begin{align}\label{con-fun-ut}
\widetilde f|_{\xi_i}(x):=\lim_{t\rightarrow 0^+}\widetilde f_t|_{\xi_i}(x)=f|_{\xi_i}(x).
\end{align}
Moreover, $W_i(\widetilde f_t)$ is continuous function of $t$ $(t\in[0,1])$, then
 $\lim\limits_{t\rightarrow 0^+}W_i(\widetilde f_t)=W_i(f).$
Since $f_t|_{\xi_i}=e^{-dt}\widetilde f|_{\xi_i}(x)$, we have
\begin{align}\label{que-inv-two}
 \frac{ W_i(f_t)-W_i(f)}{t}=W_i(f)\frac{e^{-dt}-1}{t}+e^{-dt}\frac{W_i(\widetilde f_t)-W_i(f)}{t}.
\end{align}
Notice that, $\widetilde f_t|_{\xi_i}\geq f|_{\xi_i}$, we have the following two cases,
 that is: $\exists t_0>0: W_i(\widetilde f_{t_0})=W_i(f) $ or $W_i(\widetilde f_{t})=W_i(f)$, $\forall t>0.$

For the first case, since $W_i(\widetilde f_t)$ is a monotone increasing function of $t$, it must holds $W_i(\widetilde f_t)=W_i(f)$ for every $t\in [0,t_0]$. Hence we have $\lim\limits_{t\rightarrow 0^+}\frac{W_i(f_t)-W_i(f)}{t}=-dW_i(f)$, the statement of the theorem holds true.

In the latter case, since $\widetilde f_t|_{\xi_i}$ is increasing non-negative function, it means that $\log(W_i(\widetilde f_t))$ is an increasing concave function of $t$.
Then, $\exists \frac{\log(W_i(\widetilde f_t))-\log(W_i( f))}{t}\in [0,+\infty].$
On the other hand, since
\begin{align*}
\log'\Big(W_i(\widetilde f_t)\Big)\Big|_{t=0}=\lim_{t\rightarrow 0^+}\frac{\log(W_i(\widetilde f_t))-\log(W_i( f))}{W_i(\widetilde f_t)-W_i(f)}=\frac{1}{W_i (f)}.
\end{align*}
Then
\begin{align}\label{qer-lim-two}
\lim_{t\rightarrow 0^+}\frac{W_i(\widetilde f_t)-W_i(f)}{\log(W_i(\widetilde f_t))-\log(W_i( f))}=W_i(f)>0.
\end{align}
From above we infer that $\exists\lim_{t\rightarrow 0^+}\frac{W_i(\widetilde f_t)-W_i(f)}{t}\in[0,+\infty].$
Combining the above formulas we obtain
\begin{align*}
 \lim_{t\rightarrow 0^+}\frac{W_i(f_t)-W_i(f)}{t}\in[-\max\{d,0\}W_i(f),+\infty].
\end{align*}
So we complete the proof.
\end{proof}

In view of the example of the mixed Quermassintegral, it is natural to ask whether in general, $W_i(f,g)$ has some kind of integral representation.

\begin{defn}
  Let $\xi_i\in\mathcal G_{i,n}$ and $f=e^{-u}\in \mathcal A'$. Consider the gradient map $\nabla u: \mathbb R^n \rightarrow \mathbb R^n$, the Borel measure $\mu_i(f)$ on $\xi_i$ is defined by
\begin{align*}
  \mu_i(f):=\frac{(\nabla u|_{\xi_i})_\sharp}{\|x\|^{n-i}}(f|_{\xi_i}).
\end{align*}
\end{defn}

Recall that the following Blaschke-Petkantschin formula is useful.
\begin{prop}[\cite{jen-loc1998}]
Let $\xi_i\in \mathcal G_{i,n}$ $(i=1, \,2,\cdots ,n)$ be linear subspace of $\mathbb R^n$, $f$ be a non-negative bounded Borel function on $\mathbb R^n$, then
\begin{align}
\int_{\mathbb R^n}f(x)dx=\frac{\omega_n}{\omega_i}\int_{\mathcal G_{i,n}}\int_{\xi_i}f(x)\|x\|^{n-i}dxd\mu(\xi_i).
\end{align}
\end{prop}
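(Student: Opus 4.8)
The plan is to prove the formula by passing to polar (spherical) coordinates on both sides and thereby reducing the stated identity to a single statement about rotation–invariant measures on the sphere $S^{n-1}$. Since the claim concerns a nonnegative Borel function $f$, every integral below is well defined in $[0,+\infty]$ and Tonelli's theorem licenses all the interchanges of integration that follow; the origin, where the weight $\|x\|^{n-i}$ degenerates, is a Lebesgue null set and may be discarded throughout.

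First I would rewrite the left-hand side in polar coordinates: writing $x=r\theta$ with $r=\|x\|>0$ and $\theta\in S^{n-1}$,
\begin{align*}
\int_{\mathbb R^n}f(x)\,dx=\int_{S^{n-1}}\int_0^{\infty}f(r\theta)\,r^{n-1}\,dr\,d\theta .
\end{align*}
Next I would treat the inner integral on the right-hand side the same way, but now inside the $i$-dimensional subspace $\xi_i$. For $x\in\xi_i$, polar coordinates in $\xi_i$ give the volume element $r^{i-1}\,dr\,d\sigma_{\xi_i}(\theta)$, where $\sigma_{\xi_i}$ is the surface measure on the unit sphere $S(\xi_i):=S^{n-1}\cap\xi_i$. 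Since $\|x\|^{n-i}=r^{n-i}$, the two powers of $r$ combine into $r^{n-1}$, so that
\begin{align*}
\int_{\xi_i}f(x)\|x\|^{n-i}\,dx=\int_{S(\xi_i)}\int_0^{\infty}f(r\theta)\,r^{n-1}\,dr\,d\sigma_{\xi_i}(\theta).
\end{align*}
The key observation is that the weight $\|x\|^{n-i}$ is exactly what is needed to restore the radial factor $r^{n-1}$ that already appears on the left, so the whole identity collapses onto the sphere.

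The heart of the argument is then the resulting spherical identity. Setting $g(\theta):=\int_0^{\infty}f(r\theta)\,r^{n-1}\,dr$, a nonnegative Borel function on $S^{n-1}$, it remains to prove
\begin{align*}
\int_{S^{n-1}}g(\theta)\,d\theta=\frac{\omega_n}{\omega_i}\int_{\mathcal G_{i,n}}\int_{S(\xi_i)}g(\theta)\,d\sigma_{\xi_i}(\theta)\,d\mu(\xi_i).
\end{align*}
I would establish this by an invariance and uniqueness argument. The right-hand side defines a positive linear functional $\Lambda$ on $C(S^{n-1})$; because the Haar measure $\mu$ on $\mathcal G_{i,n}$ is $O(n)$–invariant and the family $\{S(\xi_i)\}_{\xi_i\in\mathcal G_{i,n}}$ is carried to itself equivariantly under rotations, $\Lambda$ is $O(n)$–invariant. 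By the uniqueness (up to a scalar) of the rotation–invariant Borel measure on $S^{n-1}$, $\Lambda$ must be a constant multiple of $\int_{S^{n-1}}(\cdot)\,d\theta$. The scalar is then pinned down by testing on $g\equiv 1$: the inner integral becomes the total surface measure of the $(i-1)$–sphere $S(\xi_i)$, which is independent of $\xi_i$, and comparing this with the total surface measure of $S^{n-1}$ fixes the constant to reproduce the prefactor $\tfrac{\omega_n}{\omega_i}$ under the normalization conventions in force for $\mu$ and for the $\omega$'s.

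The main obstacle I anticipate is precisely this spherical identity: verifying that averaging the measures $\sigma_{\xi_i}$ over the Grassmannian yields a genuinely rotation–invariant functional, and then the careful bookkeeping of normalizing constants (the total masses of $S(\xi_i)$ and of $S^{n-1}$, and the fact that $\mu$ is a probability measure). The polar reductions on the two sides are routine once the degeneracy at the origin is dispatched; the substance lies in invoking uniqueness of the invariant spherical measure and in computing the single scalar that matches the two sides. A convenient sanity check, which I would carry out, is to evaluate both sides on $f=\mathbf 1_{B^n}$, the indicator of the unit ball, whereupon everything reduces to one–dimensional integrals and the constant can be read off directly.
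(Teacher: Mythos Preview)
The paper does not prove this proposition; it merely quotes the Blaschke--Petkantschin formula from Jensen's \emph{Local Stereology} and uses it as a tool in the proof of Theorem~1.1. So there is no ``paper's own proof'' to compare against.

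Your approach is correct and is exactly the standard route to the linear Blaschke--Petkantschin formula: pass to polar coordinates on both sides so that the weight $\|x\|^{n-i}$ restores the radial factor $r^{n-1}$, reduce to an identity of measures on $S^{n-1}$, and then invoke $O(n)$-invariance and the uniqueness of the rotation-invariant probability measure on the sphere to conclude that the averaged spherical measure is a scalar multiple of surface measure, pinning down the scalar with a single test function. One point to watch when you ``test on $g\equiv 1$'': with the paper's convention that $\omega_k$ is the volume of the $k$-dimensional unit ball (so $|S^{k-1}|=k\omega_k$) and with $\mu$ the probability Haar measure on $\mathcal G_{i,n}$, the constant one actually obtains is $\dfrac{n\omega_n}{i\omega_i}$ rather than $\dfrac{\omega_n}{\omega_i}$; you should either record that the stated constant presumes a different normalization (e.g.\ $\omega_k$ denoting the surface area of $S^{k-1}$, which is common in stereology), or flag the discrepancy. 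Apart from this bookkeeping issue, the argument you outline is complete.
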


Now we give a proof of Theorem \ref{thm-mix-quer}.
\begin{proof}[Proof of Theorem \ref{thm-mix-quer}]
By the definition of the $i$-th Quermassintegral of $f$, we have
$$\frac{W_{i}(f_t)-W_i(f)}{t}=\frac{\omega_n}{\omega_{n-i}}\int_{\mathcal G_{n-i,n}}\frac{J_{n-i}(f_t)-J_{n-i}(f)}{t}d\mu(\xi_{n-i}).$$
Let $t>0$ be fixed, take $C\subset \subset \Omega|_{\xi_{n-i}}$, and by reduction $0\in int (\Omega)|_{\xi_{n-i}}$, we have $C\subset \subset \Omega|_{\xi_{n-i}}$, by Lemma
\ref{lem-duvp}, we obtain
\begin{align*}
\lim_{h\rightarrow 0}&\frac{J_{n-i}(f_{t+h})(x)-J_{n-i}(f_t(x))}{h}=\int_{\xi_{n-i}}\psi\big(\nabla u_t|_{\xi_{n-i}}(x)\big)f_{t}|_{\xi_{n-i}}(x)dx,
\end{align*}
where $\psi=h_{g|_{\xi_{n-i}}}=v|_{\xi_{n-i}}^*$. Then we have,
\begin{align*}
  \lim_{h\rightarrow 0}\frac{W_{i}(f_{t+h})-W_i(f_t)}{h}&=\frac{\omega_n}{\omega_{n-i}}\int_{\mathcal G_{n-i,n}}\int_{\xi_{n-i}}\frac{\psi\big(\nabla u_t|_{\xi_{n-i}}(x)\big)f_{t}|_{\xi_{n-i}}(x)}{\|x\|^{n-i}}\|x\|^{n-i}dxd\mu(\xi_{n-i}),\\
&=\int_{\mathbb R^n}\frac{\psi\big(\nabla u_t|_{\xi_{n-i}}(x)\big)f_{t}|_{\xi_{n-i}}(x)}{\|x\|^{n-i}}dx\\
&=\int_{\mathbb R^n}\psi d\mu_{n-i}(f_t).
\end{align*}
So we have, $W_{i}(f_{t+h})-W_i(f_t)=\int_0^t\Big\{\int_{\mathbb R^{n}}\psi d\mu_{n-i}(f_s)\Big\}ds.$
The continuous of $\psi$ implies
$\lim\limits_{s\rightarrow 0^+}\int_{\mathbb R^{n}}\psi d\mu_{n-i}(f_s)ds=\int_{\mathbb R^{n}}\psi  d\mu_{n-i}(f)ds.$
Therefore,
\begin{align*}
  \lim_{t\rightarrow 0^+}\frac{W_i(f_t)-W_i(f)}{t}&=\frac{d}{dt}W_i(f_t)|_{t=0^+}=\lim_{s\rightarrow 0^+}\frac{d}{dt}W_i(f_t)|_{t=s}\\
&=\lim_{s\rightarrow 0^+}\frac{d}{dt}\int_0^t\Big\{\int_{\mathbb R^{n}}\psi  d\mu_{n-i}(f_s)\Big\}ds\\
&=\int_{\mathbb R^n}\psi d\mu_{n-i}(f).
\end{align*}
Since $\psi= h_{g|_\xi}$, so we have
$$W_i(f,g)=\frac{1}{n-i}\lim\limits_{t\rightarrow 0^+}\frac{W_i(f_t)-W_i(f)}{t}=\frac{1}{n-i}\int_{\mathbb R^n}h_{g|_{\xi_{n-i}}} d\mu_{n-i}(f).$$
So we complete the proof.
\end{proof}

\begin{rem}
From the integral representation (\ref{mix-que-exp}),  the $i$-th functional mixed Quermassintegral is linear in its second argument, with the sum in $\mathcal A'$, for $f,\,g,\,h\in\mathcal A'$, then we have $W_i(f,g\oplus h)= W_i(f,g)+ W_i(f,h).$
\end{rem}

{\bf Data Availability:} No data were used to support this study.

{\bf Conflicts of Interest:} The authors declare no conflict of interest.

{\bf Authors¡¯ Contributions:} All authors contributed equally to this work. All authors have
read and agreed to the published version of this manuscript.\\

{\bf Acknowledgments}\\

The authors would like to strongly thank the anonymous referee for the very valuable comments and helpful suggestions that directly lead to improve the original manuscript.

\end{document}